\def\mathcal{\mathscr}
\newtheorem{thm}{Theorem}[section]
\newtheorem{lem}[thm]{Lemma}
\newtheorem{cor}[thm]{Corollary}
\newtheorem{prop}[thm]{Proposition}
\newtheorem{conj}[thm]{Conjecture}
\theoremstyle{definition}
\newtheorem{rem}[thm]{Remark}
\newtheorem{defn}[thm]{Definition}
\newcommand{\mca}[1]{{\mathcal{#1}}}
\def\Q{{\mathbb Q}}
\def\Z{{\mathbb Z}}
\def\R{{\mathbb R}}
\def\C{{\mathbb C}}
\def\CH{\mathrm{CH}\,}
\def\crit{\mathrm{Crit}\,}
\def\del{\partial}
\def\ep{\varepsilon}
\def\gcd{\mathrm{gcd}}
\def\GH{\mathrm{GH}}
\def\nice{\text{\rm nice}}
\def\pt{\text{\rm pt}}
\def\plim{\mathop{\lim_{\longleftarrow}}}
\def\SH{\text{\rm SH}\,} 
\def\sing{\text{\rm sing}\,} 
\def\spec{\text{\rm Spec}\,}
\begin{document}
\pagestyle{plain}
\thispagestyle{plain}

\title[A conjectural chain model for positive $S^1$-equivariant symplectic homology of star-shaped toric domains in $\mathbb{C}^2$]
{A conjectural chain model for positive $S^1$-equivariant symplectic homology of star-shaped toric domains in $\mathbb{C}^2$}

\author[Kei Irie]{Kei Irie}
\address{Research Institute for Mathematical Sciences, Kyoto University, Kyoto 606-8502, JAPAN} 
\email{iriek@kurims.kyoto-u.ac.jp} 

\begin{abstract}
For any star-shaped toric domain in $\mathbb{C}^2$, 
we define a filtered chain complex which conjecturally 
computes positive $S^1$-equivariant symplectic homology of the domain. 
Assuming this conjecture, we show that the limit $\lim_{k \to \infty} c^{\mathrm{GH}}_k(X)/k$
exists for any star-shaped toric domain $X \subset \mathbb{C}^2$, 
where $c^{\mathrm{GH}}_k$ denotes the $k$-th Gutt-Hutchings capacity.
\end{abstract} 

\maketitle

\section{Introduction} 

Let $n$ be a positive integer, 
and consider $\C^n$ with a symplectic form $\sum_{j=1}^n dx_j dy_j$. 
A star-shaped domain in $\C^n$ is a compact subset $X \subset \C^n$ with a $C^\infty$-boundary such that 
$(0,\ldots,0)$ is in the interior of $X$, 
and for any $z \in \C^n \setminus \{0\}$ 
the half line $\{ tz \mid t \in \R_{\ge 0}\}$ intersects $\del X$ transversally at a unique point. 

For any such $X$ and $-\infty < a < b \le \infty$, one can define a vector space $\SH^{S^1,[a,b)}_*(X)$ called $S^1$-equivariant symplectic homology. 
It is well-known that $\SH^{S^1, [\delta,\infty)}_*(X) \cong H^{S^1}_{*-(n+1)}(\pt)$ when $\delta>0$ is sufficiently close to $0$. 
On the other hand, this family of vector spaces (with maps between them) has rich quantitative information of $X$. 
In particular, one can define the Gutt-Hutchings capacities $(c^{\GH}_k)_{k \ge 1}$ 
from ``positive part'' of $S^1$-equivariant symplectic homology. 
The Gutt-Hutchings capacities were defined in \cite{Gutt_Hutchings} for Liouville domains. 
It is conjectured (\cite{Gutt_Hutchings} Conjecture 1.9) that 
the Gutt-Hutchings capacities coincide with 
the $S^1$-equivariant Ekeland-Hofer capacities \cite{Ekeland_Hofer} 
for compact star-shaped domains in $\C^n$. 

A star-shaped domain $X \subset \C^n$ is called a (star-shaped) toric domain if $X$ is invariant by the standard $T^n$-aciton on $\C^n$. 
When $X$ is a so called ``convex'' or ``concave'' toric domain, 
Gutt-Hutchings \cite{Gutt_Hutchings} proved explicit formulas to compute capacities $c^\GH_k(X)$ for all $k \ge 1$. 
One remarkable consequence of the formulas is that $\lim_{k \to \infty} \frac{c^\GH_k(X)}{k}$ exists 
if $X$ is a convex or concave toric domain. 
Actually, this existence of the limit holds under a much weaker assumption; see Remark 1.22 of \cite{Gutt_Hutchings}. 
The proof of the formulas in \cite{Gutt_Hutchings} 
is ``elementary'' in the sense that the proof uses only basic properties of the capacities, which are combined in a very clever way. 
On the other hand, it is not clear how to generalize the formulas for toric domains which are neither convex nor concave. 
Even for convex or concave domains, it is not clear
how to obtain information beyond the capacities, such as barcodes associated to persistent modules defined from $S^1$-equivariant symplectic homology. 

The aim of this note is to define a filtered chain complex for any 
star-shaped toric domain $X \subset \C^2$, 
which conjecturally computes $\SH^{S^1, [a,b)}_*(X)$ for any $0<a<b\le \infty$. 
Assuming this conjecture, we show that $\lim_{k \to \infty} \frac{c^\GH_k(X)}{k}$ exists for any 
star-shaped toric domain $X \subset \C^2$. 

Let us describe the plan of this paper. 
In Section 2, we define an $\R$-filtered chain complex $C^\Omega_*$ for any $\Omega \in \mca{S}^2$ 
(see Definition \ref{defn_starshaped} below). 
For any $\Omega \in \mca{S}^2$, we define a star-shaped domain $X_\Omega \subset \C^2$, 
formulate a conjecture that $C^\Omega_*$ computes $\SH^{S^1, [a,b)}_*(X_\Omega)$ for any $0 < a< b \le \infty$, 
and support this conjecture by some computations. 
In Section 3, we define a sequence of capacities $(c_k(\Omega))_{k \ge 1}$ for any $\Omega \in \mca{S}^2$. 
Assuming the above conjecture, one has $c_k(\Omega) = c^\GH_k(X_\Omega)$ for any $k \ge 1$ and $\Omega \in \mca{S}^2$. 
We compute the capacities $c_k(\Omega)$ when $\Omega$ is concave or (weakly) convex, 
and check that the results are consistent with the formulas in \cite{Gutt_Hutchings}. 
Moreover, we show that $\lim_{k \to \infty} \frac{c_k(\Omega)}{k}$ exists for any $\Omega \in \mca{S}^2$. 

{\bf Acknowledgement.} 
The author appreciates Jean Gutt and Michael Hutchings for very helpful comments on an earlier version of this paper. 
The author is supporeted by JSPS KAKENHI Grant Number 18K13407 and 19H00636. 

{\bf Conventions.} 
Throughout this paper we consider vector spaces over $\Q$ unless otherwise specified. 
An $\R$-filtration on a vector space $V$ is a family of subspaces $(V^a)_{a \in \R}$ such that $a \le b \implies V^a \subset V^b$. 
We set $V^\infty:=V$ and $V^{-\infty}:=0$. 
For any $a<b$, we denote $V^{[a,b)}:= V^b/V^a$. 

\section{A chain model}

\subsection{Definition of a chain model} 

Let us start with the following definition. 

\begin{defn}\label{defn_starshaped} 
For any $n \in \Z_{\ge 1}$, let
$\Sigma^n:= \{ v \in ( \R_{\ge 0})^n \mid |v|=1 \}$.
Let $\mca{S}^n$ denote the set consisting of 
$\Omega \subset (\R_{\ge 0})^n$ such that 
there exists $r_\Omega \in C^\infty(\Sigma^n, \R_{>0})$ satisfying 
$\Omega = \{ tz \mid 0 \le t \le r_\Omega(z), \, z \in \Sigma^n \}$.
For any $\Omega \in \mca{S}^n$, let $U_\Omega:= (\R_{>0})^n \setminus \Omega$, 
and let $\bar{U}_\Omega \subset (\R_{\ge 0})^n$ denote the closure of $U_\Omega$
in $(\R_{\ge 0})^n$. 
\end{defn} 

In this paper we mostly consider the case $n=2$. 
For any $\Omega \in \mca{S}^2$, we define a $\Z$-graded $\Q$-vector space $C^\Omega_*$ by 
\[ 
C^\Omega_*:= \bigoplus_{(m_1, m_2) \in \Z^2 \setminus (\Z_{\le 0})^2}   C^{\sing}_{*+1-2(m_1+m_2)}(U_\Omega) \otimes  H_*(S^1),
\] 
where $C^{\sing}_*$ denotes the singular chain complex and $S^1:=\R/\Z$. 

Let us define a boundary operator on $C^\Omega_*$. 
Let $e_0:= [\pt] \in H_0(S^1)$ and $e_1:=[\sigma] \in H_1(S^1)$, 
where $\sigma:[0,1] \to S^1; \, t \mapsto [t]$. 
For any homogeneous element $x \in C^\Omega_*$, 
let us set 
\[ 
x= \sum_{(m_1, m_2, i) \in (\Z^2 \setminus (\Z_{\le 0})^2) \times \{0, 1\}} x_{m_1, m_2, i} \otimes e_i, 
\] 
and define $\del x$ by 
\begin{align*} 
(\del x )_{m_1, m_2, 0 }&:=  \del^\sing x _{m_1, m_2, 0},  \\ 
(\del x)_{m_1, m_2, 1}&:= \del^\sing x_{m_1, m_2, 1} + 
(-1)^{|x|}  ( m_2 \cdot  x_{m_1+1, m_2, 0} -  m_1 \cdot x_{m_1, m_2+1, 0}),
\end{align*} 
where $\del^\sing$ denotes the boundary operator of the singular chain complex. 
One can check $\del^2=0$ by direct computations.

Let us define an $\R$-filtration on $C^\Omega_*$. 
For any $(x_1, x_2) \in \R^2$, define 
$A_{x_1, x_2}: \R^2 \to \R$ by 
\[ 
A_{x_1,x_2}(y_1,y_2):= x_1 y_1 + x_2 y_2. 
\] 
For any $a \in \R$ and $(m_1,m_2) \in \Z^2$, let 
\[ 
U_\Omega(a: m_1,m_2):= \{(x_1,x_2) \in U_\Omega \mid A_{m_1,m_2}(x_1,x_2) < a\}. 
\] 
and 
\[ 
C^{\Omega,a}_*:= \bigoplus_{(m_1, m_2) \in \Z^2 \setminus (\Z_{\le 0})^2}  
C^{\sing}_{*+1-2(m_1+m_2)}(U_\Omega(a:m_1,m_2)) \otimes  H_*(S^1).
\] 
Then $(C^{\Omega, a}_*)_{a \in \R}$ is an $\R$-filtration on $C^\Omega_*$. 
For any $a<b$, we denote $C^{\Omega, [a,b)}_*:= C^{\Omega, b}_*/C^{\Omega, a}_*$. 
For any $(m_1, m_2) \in \Z^2 \setminus (\Z_{\le 0})^2$, there holds
\begin{equation}\label{U_Omega_length} 
U_\Omega(a:m_1+1, m_2) , U_\Omega(a: m_1, m_2+1) \subset U_\Omega(a: m_1, m_2). 
\end{equation}
Thus $\del(C^{\Omega, a}_*) \subset C^{\Omega, a}_{*-1}$. 

If $\Omega_1, \Omega_2 \in \mca{S}^2$ satisfy $\Omega_1 \subset \Omega_2$, 
then $U_{\Omega_2} \subset U_{\Omega_1}$. 
Then we obtain a natural chain map 
$C^{\Omega_2}_* \to C^{\Omega_1}_*$
which preserves the $\R$-filtrations. 

\begin{lem}\label{lem_F_m} 
Let $-\infty < a<b \le \infty$. 
For any $m \in \Z$, let 
\[ 
F_mC^{\Omega, [a,b)}_*: = \bigoplus_{\substack{(m_1, m_2) \in \Z^2 \setminus (\Z_{\le 0})^2 \\ m_1+m_2 \le m}}
C^{\sing}_{*+1-2(m_1+m_2)}(U_\Omega(b:m_1,m_2), U_\Omega(a:m_1,m_2)) \otimes  H_*(S^1).  
\] 
Then $(F_mC^{\Omega, [a,b)}_*)_{m \in \Z}$ is a filtration on $C^{\Omega, [a,b)}_*$. 
Let $(E^r, \del_{E^r})_{r \ge 1}$ be the spectral sequence associated to this filtration. 
Then the following holds. 
\begin{itemize} 
\item[(i):] There exists an isomorphism 
\[ 
E^1_{p,q}
\cong \bigoplus_{\substack{m_1+m_2=p \\ i+j=q-p+1}}
H_i(U_\Omega(b:m_1,m_2), U_\Omega(a:m_1,m_2)) \otimes  H_j(S^1) 
\] 
such that 
$\del_{E^1}: E^1_{p,q} \to E^1_{p-1,q}$ is given by 
\begin{align*} 
(\del_{E^1} x )_{m_1, m_2, 0 }&=0, \\ 
(\del_{E^1} x)_{m_1, m_2, 1}&=(-1)^{|x|}  ( m_2 \cdot  x_{m_1+1, m_2, 0} -  m_1 \cdot x_{m_1, m_2+1, 0}). 
\end{align*} 
\item[(ii):] 
$\del_{E^r}=0$ if $r \ge 2$. 
Moreover $H_*(C^{\Omega,[a,b)}) \cong \bigoplus_{p+q=*} E^\infty_{p,q}$. 
\end{itemize} 
\end{lem}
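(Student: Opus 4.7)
The plan is to set up the spectral sequence for the increasing filtration $(F_m)$, identify $E^0$ and $E^1$, and prove collapse at $E^2$ by exhibiting genuine $\del$-cycle representatives for every $E^2$-class.

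First, I would decompose $\del = \del^{\mathrm{sing}} + \del^{\mathrm{comb}}$, where $\del^{\mathrm{sing}}$ acts as the singular boundary on each $(m_1, m_2, i)$-piece and $\del^{\mathrm{comb}}$ is the combinatorial term $(-1)^{|x|}(m_2 x_{m_1+1, m_2, 0} - m_1 x_{m_1, m_2+1, 0})$ contributing to $(\del x)_{m_1, m_2, 1}$. The first preserves the $(m_1, m_2)$-index hence $F_m$, while the second lowers $m_1 + m_2$ by exactly one, showing $\del(F_m) \subset F_m$.

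For part (i), the associated graded $F_m / F_{m-1}$ decomposes, over $(m_1, m_2)$ with $m_1 + m_2 = m$, as a direct sum of the relative singular chain complexes of the pair $(U_\Omega(b:m_1,m_2), U_\Omega(a:m_1,m_2))$ tensored with $H_*(S^1)$; the induced $d^0$ is $\del^{\mathrm{sing}}$, because the combinatorial term vanishes on the associated graded. Passing to homology, with the Künneth isomorphism trivial over $\Q$ since $H_*(S^1)$ is free, yields the stated description of $E^1_{p,q}$, the bidegree constraint $i + j = q - p + 1$ coming from tracking the total grading through the shift $* + 1 - 2(m_1 + m_2)$. The differential $d^1$ is induced by $\del^{\mathrm{comb}}$ on representatives, giving the displayed formula.

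For part (ii), the crucial step is that every $[x] \in E^2_{p, q}$ has a representative which is a genuine $\del$-cycle; this immediately forces $d^r[x] = 0$ for all $r \ge 2$. Using the freedom to modify $x$ within its $E^2$-class by elements of $F_{p-1}$, I would pick $x$ supported only at $m_1 + m_2 = p$ with each component $\del^{\mathrm{sing}}$-closed, so that $\del x = \del^{\mathrm{comb}} x$ lies in the $(\cdot, \cdot, 1)$-positions at level $p - 1$. The assumption $d^1[x] = 0$ means each such component is $\del^{\mathrm{sing}}$-exact in the corresponding relative chain complex; choosing primitives $c_{m_1, m_2}$ and setting $y_{m_1, m_2, 1} := c_{m_1, m_2}$ (zero elsewhere), one checks that $\tilde x := x - y$ satisfies $\del \tilde x = 0$, since $\del^{\mathrm{comb}} y = 0$ (as $y$ has no $e_0$-component) and $\del^{\mathrm{sing}} y$ cancels the remaining $\del^{\mathrm{comb}} x$ term at level $p - 1$. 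The structural reason this works is the asymmetry of $\del^{\mathrm{comb}}$: it only produces $e_1$-components and annihilates $e_1$-components, so any obstruction to $\del$-closedness lives in $e_1$ and can be killed by a single $e_1$-primitive.

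For the convergence statement, in each total degree $n$ the constraints $i, j \in \{0, 1\}$ and $i + j = q - p + 1$ force $E^1_{p, n-p} = 0$ outside at most three consecutive values of $p$. The filtration is also exhaustive and Hausdorff at the chain level (any chain has finite support in $(m_1, m_2)$). Combined with $d^r = 0$ for $r \ge 2$, this yields $H_*(C^{\Omega, [a,b)}) \cong \bigoplus_{p+q = *} E^\infty_{p, q}$ (the induced filtration on homology splits over $\Q$). The main obstacle is the degeneration step in part (ii), whose heart is the structural asymmetry between $e_0$- and $e_1$-components under $\del^{\mathrm{comb}}$.
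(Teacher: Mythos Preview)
Your cycle-lifting argument for degeneration in (ii) is correct and is a genuinely different route from the paper's. The paper instead observes a direct-sum decomposition of the chain complex itself: for each $l \in \Z$, let $C^l_*$ be the span of the $e_1$-components at level $m_1+m_2=l$ together with the $e_0$-components at level $m_1+m_2=l+1$. One checks from the formula for $\del$ that each $C^l$ is a subcomplex and that $C^{\Omega,[a,b)}_* = \bigoplus_l C^l_*$. Since the induced filtration on $C^l$ is two-step ($F_{l-1}C^l=0$, $F_{l+1}C^l=C^l$), its spectral sequence trivially has $d^r=0$ for $r\ge 2$ and converges; summing over $l$ gives both assertions of (ii) at once. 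Your lifting argument, which produces a genuine cycle supported in the two consecutive levels $p$ and $p-1$, is in effect rediscovering this splitting at the level of cycles.

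Where your write-up is weaker is the convergence step. First, the claim ``$i\in\{0,1\}$'' is not justified: the relative homology of a pair of open subsets of $\R^2$ can be nonzero in degree $2$, so one should argue $i\le 2$ (via the fact that open subsets of the plane have the homotopy type of graphs), which still gives the finiteness you want. Second, and more substantively, ``exhaustive, Hausdorff, $E^1$ bounded in each total degree, degenerates at $E^2$'' is not a standard convergence theorem: the chain-level filtration here is unbounded below, and one must still rule out phantom classes in $\bigcap_p F_pH_n$. This can be done with some extra care (using that the bound $i\le 2$ is uniform in $n$), but the paper's decomposition sidesteps the issue entirely and is the cleaner fix.
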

\begin{proof} 
(i) is straightforward. 
To see (ii), for each $l \in \Z$ let 
\begin{align*} 
C^l_*:=  & \bigoplus_{m_1+m_2=l} C_{*+1-2l}^{\sing}(U_\Omega(b:m_1,m_2), U_\Omega(a:m_1,m_2)) \otimes \R e_1  \\ 
\oplus &\bigoplus_{m_1+m_2=l+1} C_{*-1-2l}^{\sing}(U_\Omega(b:m_1,m_2), U_\Omega(a:m_1,m_2)) \otimes \R e_0. 
\end{align*} 
Then $C^l_*$ is a subcomplex of $C_*:= C^{\Omega,[a,b)}_*$, and 
there holds $C_* = \bigoplus_{l \in \Z} C^l_*$, 
in particular $H_*(C) \cong \bigoplus_{l \in \Z} H_*(C^l)$. 
Each $C^l$ is equipped with a filtration 
$F^mC^l:= F^m C \cap C^l\, (m \in \Z)$. 
Let $(E^r(C^l))_{r \ge 1}$ be the spectral sequence associated to this filtration. 
Since $F^m C \cong \bigoplus_{l \in \Z} F^m C^l$ for each $m$, 
there holds $E^r_{p,q}(C) \cong \bigoplus_{l \in \Z} E^r_{p,q}(C^l)$
for any $r \ge 1$. 
Thus it is sufficient to show that 
$\del_{E^r(C^l)}=0 \,(r \ge 2)$ and 
$H_*(C^l) \cong \bigoplus_{p+q=*} E^\infty_{p,q}(C^l)$ 
for each $l \in \Z$. 
This follows from $F^{l-1}C^l=0$ and $F^{l+1}C^l =C^l$. 
\end{proof} 

\begin{rem}\label{rem_E_1} 
Suppose that $H_i(U_\Omega(b:m_1,m_2), U_\Omega(a:m_1,m_2)) \ne 0$ only if $i=0$. 
Then $E^1_{p,q} \ne 0$ only if $q=p$ or $q=p-1$. Moreover, for any $j \in \Z$ 
\[ 
E^1_{p, p-1+j} \cong \bigoplus_{m_1+m_2=p} H_0(U_\Omega(b:m_1,m_2), U_\Omega(a:m_1,m_2))  \otimes H_j(S^1). 
\] 
\end{rem}

\subsection{Conjectural relation to $S^1$-equivariant symplectic homology} 

Let $n$ be a positive integer, and let 
$\lambda_0:= \frac{1}{2} \sum_{j=1}^n (x_j dy_j - y_j dx_j) \in \Omega^1(\C^n)$. 
For any star-shaped domain $X \subset \C^n$, 
$(X, \lambda_0)$ is a Liouville domain. 

For any $-\infty < a < b \le \infty$, 
one can define a $\Z$-graded vector space 
$\SH^{S^1, [a,b)}_*(X, \lambda_0)$, 
which we abbreviate by $\SH^{S^1,[a,b)}_*(X)$, 
called $S^1$-equivariant symplectic homology. 
The family of vector spaces $(\SH^{S^1, [a,b)}_*(X))_{a,b,X}$ is equipped with 
the maps (transfer morphisms) 
\[ 
\SH^{S^1, [a,b)}_*(X) \to \SH^{S^1, [a', b')}_*(X') 
\] 
for any $(a,b,X)$ and $(a', b', X')$ such that 
$a \le a'$, $b \le b'$ and $X' \subset X$.

\begin{rem} 
$S^1$-equivariant symplectic homology was defined by Viterbo \cite{Viterbo_GAFA}. 
Bourgeois-Oancea \cite{Bourgeois_Oancea} gave alternative definitions via family Floer homology 
following Seidel \cite{Seidel_biased}. 
Gutt-Hutchings \cite{Gutt_Hutchings} uses a family Floer homology definition, following the treatment in Gutt \cite{Gutt}. 
\end{rem}

For any $\Omega \in \mca{S}^2$, 
\[ 
X_\Omega:= \{ (z_1, z_2) \in \C^2 \mid (\pi|z_1|^2, \pi |z_2|^2) \in \Omega \} 
\] 
is a star-shaped domain in $\C^2$. 
Now we can state the following conjecture. 

\begin{conj}\label{conj_main} 
For any $\Omega \in \mca{S}^2$ and $0<a<b  \le \infty$, 
one can define an isomorphism of $\Z$-graded vector spaces 
\[ 
F^{a,b}_\Omega: H_*(C^{\Omega, [a,b)}) \cong \SH^{S^1, [a,b)}_*(X_\Omega) 
\] 
so that the diagram 
\[ 
\xymatrix{ 
H_*(C^{\Omega, [a,b)})\ar[d]_-{F^{a,b}_\Omega}^-{\cong} \ar[r]& H_*(C^{\Omega',[a',b')})  \ar[d]^-{F^{a',b'}_{\Omega'}}_-{\cong} \\
\SH^{S^1,[a,b)}_*(X_\Omega) \ar[r] & \SH^{S^1, [a', b')}_*(X_{\Omega'}) 
}
\] 
commutes for any $a, b, \Omega$ and $a', b', \Omega'$ such that 
$a \le a'$, $b \le b'$ and $\Omega' \subset \Omega$. 
\end{conj}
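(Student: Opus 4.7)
\emph{Approach.} The plan is to realize $C^{\Omega,[a,b)}_*$ as a Morse-Bott cascade model for $\SH^{S^1,[a,b)}_*(X_\Omega)$ adapted to the toric structure on $\del X_\Omega$, working in the Bourgeois-Oancea / Gutt family Floer framework recalled above. For a star-shaped toric $X_\Omega$, the closed Reeb orbits on $\del X_\Omega$ of multiplicity vector $(m_1,m_2) \in \Z^2 \setminus (\Z_{\le 0})^2$ occur in $S^1$-families parametrized by the points $p \in \del\Omega$ at which the outward conormal is positively proportional to $(m_1,m_2)$, and such a family has action $A_{m_1,m_2}(p)$. This is the geometric origin of the sub-level filtration $U_\Omega(a:m_1,m_2)$ appearing in the paper.

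\emph{Setup and generators.} I would first perturb $\Omega$ to a $C^\infty$-close $\Omega'$ whose boundary has only finitely many points of rational conormal direction up to a fixed action bound $b$, so that the contact form on $\del X_{\Omega'}$ is Morse-Bott in the sense of Bourgeois. For each $(m_1,m_2)$, the $S^1$-families (simple or iterated) contributing generators are indexed by critical points of $A_{m_1,m_2}$ on the relevant part of $\del\Omega'$. A standard Conley-Zehnder computation for toric orbits in $\C^2$ yields the degree shift $2(m_1+m_2)-1$ built into the chain complex, while the tensor factor $H_*(S^1) = \Q e_0 \oplus \Q e_1$ records the two generators of each Morse-Bott $S^1$-family via a Morse function on the circle. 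To reach the statement for smooth $\Omega$ one passes to a direct limit of such Morse-Bott models under refinement $\Omega' \to \Omega$: the discrete critical loci densify to a subset of $\del\Omega$, and their Morse-theoretic content should be captured by singular chains on $U_\Omega$ through the relative homology of pairs $(U_\Omega(b:m_1,m_2), U_\Omega(a:m_1,m_2))$. Lemma \ref{lem_F_m} and Remark \ref{rem_E_1} then identify the $E^1$-page of the filtration by $m_1+m_2$ with the expected generator count on the Reeb side.

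\emph{Differential and naturality.} The singular boundary $\del^\sing$ should match the cascade internal to a single Morse-Bott family, i.e., Morse flow within $\del\Omega$. The mixed term $(-1)^{|x|}(m_2 x_{m_1+1,m_2,0} - m_1 x_{m_1,m_2+1,0})$ should arise from Floer cylinders between Morse-Bott families with multiplicity vectors $(m_1+1,m_2)$ or $(m_1,m_2+1)$ and $(m_1,m_2)$; the coefficients $m_1$ and $m_2$ are exactly the intersection/linking numbers that appear in the $S^1$-equivariant count around a Morse-Bott circle of multiplicity $(m_1,m_2)$, and coincide with the combinatorial weights that appear in Hutchings' formulas for ECH of toric domains. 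Naturality for $\Omega' \subset \Omega$ should follow by choosing a toric Liouville cobordism, so that the transfer morphism is induced by the inclusion $U_\Omega \hookrightarrow U_{\Omega'}$ on singular chains; compatibility with the $\R$-filtration is automatic because $U_\Omega(a:m_1,m_2) \subset U_{\Omega'}(a:m_1,m_2)$ for each $(m_1,m_2)$.

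\emph{Main obstacle.} The hardest step is establishing the exact form of the mixed differential rigorously: this requires $S^1$-equivariant transversality for moduli spaces of Floer cylinders in a highly $T^2$-symmetric setting, via an $S^1$-equivariant almost complex structure together with abstract perturbations that break the residual $T^2$-symmetry down to the relevant $S^1$ without destroying the toric structure needed to get the coefficients $m_1, m_2$. A secondary obstacle is the passage to a smooth $\Omega$ whose rational conormal directions are dense in $\del\Omega$: at a fixed action level below $b$, infinitely many multiplicity vectors $(m_1,m_2)$ can contribute, and one must prove convergence of continuation maps compatibly with the action filtration as $\Omega' \to \Omega$. Both issues have been partially addressed for convex and concave toric domains in \cite{Gutt_Hutchings}, and the real content of the conjecture is to extend their techniques to arbitrary $\Omega \in \mca{S}^2$.
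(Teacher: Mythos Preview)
The statement is a \emph{conjecture}, and the paper does not prove it; it only sketches a heuristic immediately after the statement. So there is no ``paper's proof'' to compare against, only the paper's proposed mechanism, and your proposal should be read in the same spirit.

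Your heuristic and the paper's are genuinely different. You propose to perturb $\Omega$ to a nearby $\Omega'$ that is Morse--Bott up to a given action, build a cascade model \`a la Bourgeois for $\del X_{\Omega'}$, and then pass to a limit as $\Omega'\to\Omega$, hoping that the discrete critical data densify into singular chains on $U_\Omega$. The paper instead keeps $\Omega$ fixed, chooses a single autonomous toric Hamiltonian $H$, and degenerates the \emph{almost complex structure}: it takes a $T^2$-invariant $J^\ep$ satisfying $J^\ep(\del_{\theta_i})=-\ep r_i\del_{r_i}$ and sends $\ep\to 0$. In this adiabatic limit, Floer cylinders for $(H,J^\ep)$ are expected to project to Morse trajectories on the base $(\R_{>0})^2$, so that $\mathrm{HF}^{S^1,[a,b)}_{\le N}(H)$ is computed directly by finite-dimensional Morse theory on $U_\Omega$, yielding $C^{\Omega,[a,b)}_*$ without any approximation of the domain. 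The paper's route explains more transparently why \emph{singular chains on all of $U_\Omega$} (rather than only on critical loci of $A_{m_1,m_2}|_{\del\Omega}$) appear: for a toric $H$ the $1$-periodic orbits are $T^2$-orbits parametrized by \emph{all} points of the base, and the $H_*(S^1)$ factor is the residual fiber after the equivariant $S^1$ is divided out. Your route, by contrast, packages the analysis into the well-developed Morse--Bott cascade machinery, at the cost of an extra limiting step $\Omega'\to\Omega$ whose compatibility with the action filtration you correctly flag as a secondary obstacle.

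One concrete point to watch in your version: the sentence ``the tensor factor $H_*(S^1)$ records the two generators of each Morse--Bott $S^1$-family'' conflates two different circles. In the paper's model the $H_*(S^1)$ factor is the $T^2/S^1$ fiber over a point of $U_\Omega$, not a Morse resolution of a Reeb $S^1$-family sitting over a critical point of $A_{m_1,m_2}|_{\del\Omega}$; the Morse data of $A_{m_1,m_2}$ on $\del\Omega$ is instead encoded in the relative singular homology $H_*(U_\Omega(b:m_1,m_2),U_\Omega(a:m_1,m_2))$. If you pursue the cascade approach you will need to reconcile this bookkeeping before the limit $\Omega'\to\Omega$ can land on the paper's chain complex rather than on a different (though possibly quasi-isomorphic) model.
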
 

Let us briefly explain an idea to obtain the conjectural isomorphism $F^{a, b}_\Omega$. 
Given $0<a<b<\infty$ such that $a,b \not\in \spec(\Omega)$ and a positive integer $N$, 
take an autonomous Hamiltonian $H$ on $\C^2$ so that 
$\SH^{S^1, [a,b)}_{\le N}(X_\Omega) \cong \mathrm{HF}^{S^1, [a,b)}_{\le N}(H)$
and the following property holds: 
every $1$-periodic orbit $\gamma$ of $X_H$ with 
$\mathrm{ind}_{\mathrm{CZ}}(\gamma) \le N$
and 
$\mca{A}_H(\gamma)>0$ is contained in $(\C \setminus \{0\})^2$. 
Here $\mathrm{ind}_{\mathrm{CZ}}$ denotes the Conley-Zehnder index and 
$\mca{A}_H$ denotes the Hamiltonian action functional. 

To compute $\mathrm{HF}^{S^1, [a,b)}_{\le N}(H)$ we take $\ep>0$ 
and consider an almost complex structure $J^\ep$ on $\C^2$ which satisfies  
\[ 
J^\ep(\del_{\theta_i}) = - \ep r_i \del_{r_i}  \qquad (z_i = e^{r_i + \sqrt{-1} \theta_i}, \, i=1, 2) 
\] 
on the complement of a neighborhood of $\C \times \{0\}  \cup \{0\} \times \C$. 
Conjecturally, 
when $\ep$ is sufficiently close to $0$, 
$\mathrm{HF}^{S^1, [a,b)}_{\le N}(H)$
can be computed by counting certain Morse trajectories on $(\R_{>0})^2$, 
and one obtains an isomorphism $\mathrm{HF}^{S^1, [a,b)}_{\le N}(H) \cong H_{\le N}(C^{\Omega, [a,b)})$ via finite-dimensional Morse theory; 
this gives $F^{a,b}_\Omega$ up to degree $N$.

\subsection{Computations of relative homologies} 

In this subsection we compute relative homologies $H_*(C^{\Omega, [a,b)})$ for some special cases, 
verifying that Conjecture \ref{conj_main} is consistent with known properties of $S^1$-equivariant symplectic homology.  

We start with some preparations on toric star-shaped domains in $\C^2$. 
For any $\Omega \in \mca{S}^2$, let 
us define $\rho_\Omega \in C^\infty([0,\pi/2], \R_{>0})$ 
by $\rho_\Omega(\theta):=r_\Omega(\cos \theta, \sin \theta)$. 
In other words, 
\begin{equation}\label{eqn_Omega} 
\Omega = \{ (r \cos \theta, r \sin \theta) \mid 0 \le \theta \le \pi/2, \, 0 \le r \le \rho_\Omega(\theta) \}. 
\end{equation}
Let us define 
\begin{align*} 
\del \Omega&:= \{ (\rho_\Omega(\theta) \cos \theta, \rho_\Omega(\theta) \sin \theta) \mid 0 \le \theta \le \pi/2 \}, \\
\del_+\Omega&:= \del\Omega \cup \{ (t,0) \mid t \ge \rho_\Omega(0)\} \cup \{ (0,t) \mid t \ge \rho_\Omega(\pi/2)\}. 
\end{align*} 
For any $c \in \R$ and $(m_1,m_2) \in \Z^2 \setminus (\Z_{\le 0})^2$, let 
\begin{align*} 
\bar{U}_\Omega(c:m_1,m_2)&:= \{(x_1, x_2) \in \bar{U}_\Omega \mid A_{m_1,m_2}(x_1,x_2)<c\}, \\
\del_+\Omega(c:m_1,m_2)&:= \{(x_1,x_2) \in \del_+ \Omega \mid A_{m_1,m_2}(x_1,x_2)<c\}. 
\end{align*} 

\begin{lem}\label{lem_barU}
For any $\Omega \in \mca{S}^2$, $c \in \R_{>0}$ and $(m_1,m_2) \in \Z^2 \setminus (\Z_{\le 0})^2$, 
\[ 
H_*(\bar{U}_\Omega(c:m_1,m_2), U_\Omega(c:m_1,m_2)) = 
H_*(\bar{U}_\Omega(c:m_1,m_2), \del_+\Omega(c:m_1,m_2))=0. 
\] 
\end{lem}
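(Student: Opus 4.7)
The plan is to derive both vanishings from homotopy equivalences, using two complementary constructions. The common geometric ingredient is that $\bar{U}_\Omega$ is a topological $2$-manifold with boundary $\del_+\Omega$: this is clear away from the two corners $(\rho_\Omega(0),0)$ and $(0,\rho_\Omega(\pi/2))$, and at each corner $\bar{U}_\Omega$ is locally a closed quadrant, which is homeomorphic to a closed half-plane (e.g.\ by the complex squaring map $z\mapsto z^2$, i.e.\ $(r,\theta)\mapsto(r,2\theta)$ in polar coordinates). Since $\{A_{m_1,m_2}<c\}$ is an open subset of $\R^2$, the intersection $\bar{U}_\Omega(c:m_1,m_2)$ is an open submanifold-with-boundary of $\bar{U}_\Omega$ whose boundary is $\del_+\Omega(c:m_1,m_2)$ and whose interior is $U_\Omega(c:m_1,m_2)$.

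For the first vanishing I would invoke the standard fact that the inclusion of the interior into a topological manifold-with-boundary is a homotopy equivalence, realized by pushing the manifold slightly into its interior via a collar of the boundary. Applied with $M=\bar{U}_\Omega(c:m_1,m_2)$, this makes $U_\Omega(c:m_1,m_2)\hookrightarrow\bar{U}_\Omega(c:m_1,m_2)$ a homotopy equivalence, so $H_*(\bar{U}_\Omega(c:m_1,m_2),U_\Omega(c:m_1,m_2))=0$ follows from the long exact sequence of the pair.

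For the second vanishing I would use an explicit radial retraction. For $x\in\bar{U}_\Omega$ with polar coordinates $(r,\theta)$, set
\[
\Phi_t(x):=s(t,x)\cdot x,\qquad s(t,x):=1-t\bigl(1-\rho_\Omega(\theta)/r\bigr)\in[\rho_\Omega(\theta)/r,\,1]\subset[0,1].
\]
Then $\Phi_0=\mathrm{id}$, $\Phi_1$ projects $\bar{U}_\Omega$ radially onto $\del\Omega$ and fixes $\del\Omega$ pointwise, and each $\Phi_t$ preserves $\del_+\Omega$ (points on the coordinate rays slide along them toward the intersection with $\del\Omega$). Because $\Phi_t$ acts by a scalar, $A_{m_1,m_2}(\Phi_t(x))=s\cdot A_{m_1,m_2}(x)$, and for $c>0$ this is still $<c$ whatever the sign of $A_{m_1,m_2}(x)$: if $A_{m_1,m_2}(x)\ge 0$ then $s A_{m_1,m_2}(x)\le A_{m_1,m_2}(x)<c$, and if $A_{m_1,m_2}(x)<0$ then $s A_{m_1,m_2}(x)\le 0<c$. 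Hence $\Phi_t$ restricts to a homotopy of pairs on $(\bar{U}_\Omega(c:m_1,m_2),\del_+\Omega(c:m_1,m_2))$ from the identity to a pair map factoring through $(\del\Omega(c:m_1,m_2),\del\Omega(c:m_1,m_2))$, whose relative homology is trivially zero. The vanishing follows.

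The step I expect to require the most care is verifying the manifold-with-boundary structure of $\bar{U}_\Omega(c:m_1,m_2)$ at the two corner points of $\del_+\Omega$, and confirming that $\Phi_t$ really is a homotopy of pairs — the latter depending on the elementary sign analysis above and crucially on the hypothesis $c>0$. Once these are in place, the argument reduces to standard facts about manifolds with boundary.
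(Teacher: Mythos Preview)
Your argument is correct and close in spirit to the paper's. The radial retraction you write down is exactly the map the paper uses for the second vanishing: the paper defines $r(x)=\rho(x)\cdot x$ with $\rho(x)x\in\del\Omega$ and observes that $r$ is a common homotopy inverse to the inclusions $\del\Omega(c:m_1,m_2)\hookrightarrow\del_+\Omega(c:m_1,m_2)$ and $\del\Omega(c:m_1,m_2)\hookrightarrow\bar U_\Omega(c:m_1,m_2)$. Your version is the same retraction packaged as an explicit homotopy of pairs, and you supply the one detail the paper leaves implicit, namely that scaling by $s\in(0,1]$ preserves $\{A_{m_1,m_2}<c\}$ because $c>0$.

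The one genuine difference is in the first vanishing. The paper first reduces to a generic $c$ (avoiding critical values of $A_{m_1,m_2}|_{\del\Omega}$ and the values $m_1\rho_\Omega(0)$, $m_2\rho_\Omega(\pi/2)$), where $\bar U_\Omega(c:m_1,m_2)$ is a smooth manifold with corners, and then passes to arbitrary $c$ by a limit. You instead straighten the two corner points once and for all to view $\bar U_\Omega$ as a topological manifold with boundary, so that every open slice $\bar U_\Omega(c:m_1,m_2)$ is automatically a topological manifold with boundary and the interior-inclusion homotopy equivalence applies for all $c$ at once. This is a clean shortcut; the cost is that it implicitly invokes the topological collar theorem rather than the smooth one, though in this $2$-dimensional situation one can of course build the collar by hand.
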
 
\begin{proof} 
Let us define 
\[ 
\mca{C}:= \{ \text{critical values of $A_{m_1,m_2}|_{\del \Omega}$ } \}  \cup \{ m_1 \rho(0), m_2 \rho(\pi/2) \}. 
\] 
Note that $\mca{C}$ is a null set. 
We may assume $c \not\in \mca{C}$, 
since the case $c \in \mca{C}$ follows from this case by taking limits. 
Then $\bar{U}_\Omega(c: m_1, m_2)$ is a manifold with corners, 
which implies that 
$H_*(\bar{U}_\Omega(c:m_1,m_2), U_\Omega(c:m_1,m_2))=0$. 
Next we show that 
$H_*(\bar{U}_\Omega(c:m_1,m_2), \del_+\Omega(c:m_1,m_2))=0$. 
Let 
\[ 
\del \Omega(c: m_1, m_2):= \{ (x_1, x_2) \in \del \Omega \mid A_{m_1, m_2}(x_1, x_2) < c\}. 
\] 
Then it is sufficient to show that the inclusion maps 
\[ 
i: \del \Omega(c:m_1,m_2) \to \del_+ \Omega (c:m_1, m_2), \qquad 
j: \del \Omega(c:m_1,m_2) \to \bar{U}_\Omega(c:m_1,m_2) 
\] 
are homotopy equivalent maps. 
For any $x \in \bar{U}_\Omega(c:m_1,m_2)$, 
let $\rho(x) \in \R_{>0}$ be the unique positive real number such that 
$\rho(x) x \in \del \Omega$. Then 
\[ 
r: \bar{U}_\Omega(c:m_1,m_2) \to \del \Omega(c:m_1,m_2); \, x \mapsto \rho(x) \cdot x
\] 
is a homotopy inverse of $j$, and $r|_{\del_+ \Omega(c:m_1,m_2)}$ is an inverse of $i$.
\end{proof} 

For any $\Omega \in \mca{S}^2$, let us define 
\[ 
P(\Omega):= \{ (\rho_\Omega(0), 0), (0, \rho_\Omega(\pi/2))\} \cup \bigcup_{(m_1, m_2) \in \Z^2 \setminus (\Z_{\le 0})^2}  \crit_+(A_{m_1,m_2}|_{\del \Omega}), 
\] 
where
\[ 
\crit_+(A_{m_1,m_2}):= \{ p \in \del  \Omega \mid d A_{m_1,m_2}|_{\del \Omega}(p)=0, \quad  A_{m_1,m_2}(p) > 0 \}. 
\] 
For any $p \in P(\Omega)$ and $m \in \Z_{>0}$, we define 
$A(p,m) \in \R_{>0}$ and $i(p,m) \in \Z$ as follows: 
\begin{itemize}
\item If $p=(\rho_\Omega(0), 0)$, 
\[ 
A(p,m):= m \cdot \rho_\Omega(0) , \quad
i(p,m):= 1 + 2(m+ [mt_1]), 
\] 
where $t_1 \in \R$ is defined so that
$T_p(\del\Omega)$ is generated by $(-t_1, 1)$. 
\item If $p= (0, \rho_\Omega(\pi/2))$,
\[  
A(p,m):= m \cdot \rho_\Omega(\pi/2), 
 \quad
i(p,m):= 1 + 2(m+ [mt_2]),
\] 
where $t_2 \in \R$ is defined so that 
$T_p(\del\Omega)$ is generated by $(1,-t_2)$. 
\item If $p \not\in \{ (\rho_\Omega(0), 0), (0, \rho_\Omega(\pi/2))\}$, 
there exists unique $(m_1, m_2) \in \Z^2 \setminus (\Z_{\le 0} )^2$ such that 
$p \in \crit_+(A_{m_1,m_2})$ and $m= \gcd(m_1, m_2)$. 
Let $\mu(p)$ denote the Morse index of $p$ as a critical point of $A_{m_1,m_2}|_{\del \Omega}$. 
Then 
\[ 
A(p,m):=A_{m_1,m_2}(p), \quad 
i(p,m):= 2(m_1+m_2) + \mu(p) - 1. 
\] 
\end{itemize} 
Let 
$\spec(\Omega):= \{ A(p,m) \mid  (p,m) \in P(\Omega) \times \Z_{>0} \} \subset \R_{>0}$. 
It is easy to see that $\spec(\Omega)$ is of measure zero and closed in $\R_{\ge 0}$, 
in particular $\min \spec(\Omega)$ exists and is positive. 

Let $\mca{S}^2_\nice$ denote the set 
consists of $\Omega \in \mca{S}^2$
satisfying the following conditions: 
\begin{itemize} 
\item For any $(m_1,m_2) \in \Z^2 \setminus (\Z_{\le 0})^2$, 
$(\rho_\Omega(0), 0)$ and $(0, \rho_\Omega(\pi/2))$ are not critical points of $A_{m_1,m_2}|_{\del \Omega}$.
Moreover $A_{m_1, m_2}|_{\del \Omega}$ is a Morse function, i.e. every critical point of $A_{m_1,m_2}|_{\del \Omega}$ is nondegenerate. 
\item If $(p,m), (p', m') \in P(\Omega) \times \Z_{>0}$ satisfy 
$A(p,m)=A(p',m')$, then $(p,m) = (p',m')$. 
\end{itemize} 
It is easy to see that $\mca{S}^2_\nice$ is residual in $\mca{S}^2$ with the $C^\infty$-topology; 
see \cite{Irie_equidistribution} Lemma 6.1. 

\begin{prop}\label{prop_01} 
For any $\Omega \in \mca{S}^2$ and $0<a<b \le \infty$ such that
$a,b \not\in \spec(\Omega)$, the following holds. 
\begin{itemize} 
\item[(i):] If $[a,b)  \cap \spec(\Omega) = \emptyset$, then $H_*(C^{\Omega,[a,b)})=0$. 
\item[(ii):] If $\Omega \in \mca{S}^2_\nice$ and 
$[a,b) \cap \spec(\Omega)$ consists of one element $A(p,m)$ with 
$p \in \{(\rho_\Omega(0), 0), (0, \rho_\Omega(\pi/2))\}$, then 
$H_*(C^{\Omega, [a,b)}) \cong H_{*-i(p,m)}(\pt)$. 
\item[(iii):] If $\Omega \in \mca{S}^2_\nice$ and 
$[a,b)  \cap \spec(\Omega)$ consists of one element $A(p,m)$ with
$p \not\in \{(\rho_\Omega(0), 0), (0, \rho_\Omega(\pi/2))\}$, then
$H_*(C^{\Omega, [a,b)})  \cong H_{*-i(p,m)}(S^1)$. 
\end{itemize} 
\end{prop}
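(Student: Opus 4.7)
The plan is to apply the spectral sequence of Lemma~\ref{lem_F_m}, compute its $E^1$-page via Lemma~\ref{lem_barU} combined with Morse theory on the boundary curve $\del\Omega$, and then analyze $\del_{E^1}$ in each case. From Lemma~\ref{lem_barU} and the retraction $r\colon x \mapsto \rho(x) x$ appearing in its proof, the inclusions $U_\Omega(c:m_1,m_2) \hookrightarrow \bar U_\Omega(c:m_1,m_2)$ and $\del\Omega(c:m_1,m_2) \hookrightarrow \bar U_\Omega(c:m_1,m_2)$ are both homology isomorphisms, so by the long exact sequences of the appropriate pairs
\[
H_i(U_\Omega(b:m_1,m_2), U_\Omega(a:m_1,m_2)) \cong H_i(\del\Omega(b:m_1,m_2), \del\Omega(a:m_1,m_2)).
\]
The right-hand side is the relative homology of a sublevel-set pair of $A_{m_1,m_2}|_{\del\Omega}$ on the compact 1-manifold (with two endpoints on the coordinate axes) $\del\Omega$, hence concentrated in $i \in \{0, 1\}$.

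Next I would read off the $E^1$-contributions by elementary Morse theory on $\del\Omega$, using the nondegeneracy from $\mca{S}^2_\nice$. An interior nondegenerate critical point $p$ of $A_{m_1, m_2}|_{\del\Omega}$ contributes $\Q$ to $H_{\mu(p)}$ at level $A_{m_1,m_2}(p)$. The endpoint $(\rho_\Omega(0), 0)$, with $\del\Omega$-tangent generated by $(-t_1, 1)$, contributes $\Q$ to $H_0$ at level $m_1 \rho_\Omega(0)$ exactly when the interior directional derivative $-m_1 t_1 + m_2$ is positive, i.e.\ $m_2 \ge [m_1 t_1] + 1$; the symmetric statement holds at $(0, \rho_\Omega(\pi/2))$. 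This reproduces precisely the enumeration of $(p, m) \in P(\Omega) \times \Z_{>0}$ and of $A(p, m)$ in the statement.

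Cases (i) and (iii) then follow quickly from Lemma~\ref{lem_F_m}. In (i), no $A_{m_1, m_2}|_{\del\Omega}$ has a critical value in $[a, b)$, so the $E^1$-page is identically zero. In (iii), with $p$ an interior critical point, a unique pair $(m_1, m_2)$ satisfies $p \in \crit_+(A_{m_1, m_2})$ and $\gcd(m_1, m_2) = m$, contributing a single summand $\Q \otimes H_*(S^1)$ placed in total degrees $i(p, m)$ and $i(p, m) + 1$. A short argument using that the tangent condition forces $(m_1 \pm 1, m_2)$ and $(m_1, m_2 \pm 1)$ not to be parallel to $(m_1, m_2)$, together with $\mca{S}^2_\nice$ uniqueness, shows that these neighboring pairs support no critical value in $[a, b)$, so their $E^1$-slots vanish. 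Hence $\del_{E^1}$ vanishes on the surviving term and the claim follows.

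Case (ii) is where the bulk of the bookkeeping happens, and I expect this to be the main obstacle. Take $p = (\rho_\Omega(0), 0)$ (the other corner is symmetric); then infinitely many pairs $(m_1, m_2) = (m, n)$ with $n \ge [mt_1] + 1$ carry an $E^1$-contribution $\Q \otimes H_*(S^1)$ at action $m \rho_\Omega(0)$. Denote their generators by $x_{m, n, i}$ for $i \in \{0, 1\}$. Since $(m+1) \rho_\Omega(0) \notin [a, b)$ and, by $\mca{S}^2_\nice$ uniqueness, no other critical value of $A_{m+1, n}|_{\del\Omega}$ lies in $[a, b)$, the $E^1$-class $x_{m+1, n, 0}$ vanishes, so
\[
\del_{E^1}\, x_{m, n, 1} = n \cdot x_{m+1, n, 0} - m \cdot x_{m, n+1, 0} = -m \cdot x_{m, n+1, 0}.
\]
Since $m \ne 0$, this recursively cancels every generator except $x_{m, [mt_1] + 1, 0}$, whose total degree is $2(m + [mt_1] + 1) - 1 = 1 + 2(m + [mt_1]) = i(p, m)$, yielding $H_*(C^{\Omega, [a, b)}) \cong H_{*-i(p,m)}(\pt)$.
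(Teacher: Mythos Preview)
Your approach is the same as the paper's: feed the spectral sequence of Lemma~\ref{lem_F_m}, identify the $E^1$-page via Lemma~\ref{lem_barU} with relative homology of sublevel sets on the boundary curve, and then read off $\del_{E^1}$. Cases (i) and (iii) are handled correctly; in (iii) the vanishing of $\del_{E^1}$ is in fact immediate, since only a single filtration level $p=m_1+m_2$ carries a nonzero $E^1$-term, so there is nothing for $\del_{E^1}\colon E^1_{p,q}\to E^1_{p-1,q}$ to hit or be hit by.

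In case (ii) you have misread the formula for $\del_{E^1}$. In Lemma~\ref{lem_F_m}(i) the subscript on the left-hand side of
\[
(\del_{E^1} x)_{m_1,m_2,1}=(-1)^{|x|}\bigl(m_2\,x_{m_1+1,m_2,0}-m_1\,x_{m_1,m_2+1,0}\bigr)
\]
indexes the \emph{output} component, not the input: the differential sends $e_0$-classes at $(m_1+1,m_2)$ and $(m_1,m_2+1)$ to the $e_1$-class at $(m_1,m_2)$, lowering the filtration degree by one as it must, while $e_1$-classes are $\del_{E^1}$-cycles. Your displayed formula $\del_{E^1} x_{m,n,1}=n\,x_{m+1,n,0}-m\,x_{m,n+1,0}$ reverses the direction and is inconsistent with $\del_{E^1}\colon E^1_{p,q}\to E^1_{p-1,q}$. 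The correct computation in case (ii) reads
\[
\del_{E^1} x_{m,n,0}=\pm\bigl(n\,x_{m-1,n,1}-m\,x_{m,n-1,1}\bigr)=\mp m\,x_{m,n-1,1},
\]
using that the $(m-1,n)$-slot is empty; and $\del_{E^1} x_{m,n,1}=0$. Fortunately the resulting cancellation pairs $(x_{m,n,0},x_{m,n-1,1})$ for $n\ge[mt_1]+2$ leave exactly the same survivor $x_{m,[mt_1]+1,0}$ in total degree $i(p,m)$, so your conclusion stands, but the argument as written needs this correction.
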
 
\begin{proof} 
Let us consider the filtration $(F_m)_m$ on $C^{\Omega, [a,b)}_*$ 
and the associated spectral sequence as in Lemma \ref{lem_F_m}. 

(i): 
By Lemma \ref{lem_barU} and the assumption, 
\[ 
H_*(U_\Omega(b:m_1,m_2), U_\Omega(a:m_1,m_2)) 
\cong 
H_*(\del_+\Omega(b:m_1, m_2),  \del_+\Omega(a:m_1,m_2)) 
=0 
\] 
for any 
$(m_1,m_2) \in \Z^2 \setminus (\Z_{\le 0})^2$, thus $E^1_{p,q}=0$ for any $(p,q) \in \Z^2$. 

(ii): Let us consider the case $p=(\rho_\Omega(0), 0)$. 
By Lemma \ref{lem_barU} and the assumption, 
\[ 
H_*(U_\Omega(b:m_1,m_2), U_\Omega(a:m_1,m_2)) 
\cong \begin{cases}  H_*(\pt) &(m_1=m, \, m_2 > mt_1),   \\ 0 &(\text{otherwise}). \end{cases}
\] 
Then, by Lemma \ref{lem_F_m} and Remark \ref{rem_E_1}, we obtain 
\[ 
E^2_{k,l} \cong \begin{cases} \Q &(k=m+[mt_1]+1, \, l=m+[mt_1] ) \\ 0&(\text{otherwise}) \end{cases}
\] 
and $\del_{E^2}=0$, which implies 
$H_*(C) \cong H_{*-i(p,m)}(\pt)$. 
The case $p=(0, \rho_\Omega(\pi/2))$ is similar and omitted. 

(iii): 
By Lemma \ref{lem_barU} and the assumption, 
\[ 
H_*(U_\Omega(b:m_1,m_2), U_\Omega(a:m_1,m_2)) 
\cong 
\begin{cases} H_{*-\mu(p)} (\pt) &(p \in \crit_+(A_{m_1,m_2}), \, m = \gcd(m_1, m_2)),  \\
0&( \text{otherwise}). 
\end{cases} 
\] 
By Lemma \ref{lem_F_m}, we obtain 
\[ 
E^1_{k,l} \cong \begin{cases} \Q &(k=m_1+m_2, \, l - k - \mu(p) \in \{-1, 0\}) \\ 0 &(\text{otherwise}) \end{cases}
\] 
and $\del_{E^1}=0$, which implies 
$H_*(C^{\Omega, [a,b)}) \cong H_{*-i(p,m)}(S^1)$. 
\end{proof} 

For any $0<a \le \infty$, let 
$H^{+,a}_*(\Omega):= \plim_{\delta \to 0+} H_*(C^{\Omega, [\delta,a)})$. 
By Proposition \ref{prop_01} (i), 
$H^{+,a}_*(\Omega) \to H_*(C^{\Omega,[\delta,a)})$ is an isomorphism
if $\delta \in (0, \min \spec(\Omega))$. 

\begin{prop}\label{prop_plus} 
\begin{itemize} 
\item[(i):] For any $\Omega \in \mca{S}^2$, there holds
$H^{+,\infty}_*(\Omega) \cong H^{S^1}_{*-3}(\pt)$. 
\item[(ii):] For any $\Omega, \Omega' \in \mca{S}^2$
such that $\Omega' \subset \Omega$, 
the natural map 
$H^{+,\infty}_*(\Omega) \to H^{+,\infty}_*(\Omega')$ is an isomorphism. 
\end{itemize} 
\end{prop}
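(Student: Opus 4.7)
The plan is to compute $H_*(C^{\Omega, [\delta, \infty)})$ via the spectral sequence of Lemma \ref{lem_F_m} for small $\delta > 0$ and identify this with $H^{+, \infty}_*(\Omega)$ using Proposition \ref{prop_01}(i). Fix $\delta < \min \spec(\Omega)$ (and also $\delta < \min \spec(\Omega')$ for (ii)) so that both $H^{+, \infty}_*(\Omega) \cong H_*(C^{\Omega, [\delta, \infty)})$ and its analog for $\Omega'$ hold.

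First I would pin down the $E^1$-page. Lemma \ref{lem_barU} gives $H_*(U_\Omega, U_\Omega(\delta: m_1, m_2)) \cong H_*(\del_+\Omega, \del_+\Omega(\delta: m_1, m_2))$, and $\del_+\Omega$ is homeomorphic to $\R$. A case analysis on $(m_1, m_2) \in \Z^2 \setminus (\Z_{\le 0})^2$ shows: (a) if $m_1, m_2 \ge 1$, the infimum of $A_{m_1, m_2}$ on $\del_+\Omega$ is attained either at one of the points $(\rho_\Omega(0), 0), (0, \rho_\Omega(\pi/2))$ or at an interior critical point of $A_{m_1, m_2}|_{\del\Omega}$, and in either case lies in $\spec(\Omega)$, so $\del_+\Omega(\delta: m_1, m_2) = \emptyset$ and the relative homology is $\Q$ in degree $0$; (b) if exactly one of $m_1, m_2$ is $\ge 1$, then $A_{m_1, m_2}|_{\del\Omega}$ factors as $\rho_\Omega(\theta)(m_1 \cos\theta + m_2 \sin\theta)$ with the second factor strictly monotonic on $[0, \pi/2]$, and combined with the fact that positive interior critical values lie in $\spec(\Omega)$ this forces $\del_+\Omega(\delta: m_1, m_2)$ to be a single non-empty connected open subset of $\del_+\Omega \cong \R$, so the relative homology vanishes. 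By Remark \ref{rem_E_1}, the $E^1$-page is therefore concentrated on the diagonals $q = p - 1$ and $q = p$ with $E^1_{p, p-1} \cong E^1_{p, p}$ of dimension $p - 1$ for $p \ge 2$. Denote the basis elements $\alpha^{e_0}_{m_1, m_2}$ and $\alpha^{e_1}_{m_1, m_2}$.

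Next I would compute the $E^2$-page. By Lemma \ref{lem_F_m}(i), the differential $\del_{E^1}: E^1_{p, p-1} \to E^1_{p-1, p-1}$ is
\[
\alpha^{e_0}_{m_1, m_2} \longmapsto m_2\, \alpha^{e_1}_{m_1 - 1, m_2} - m_1\, \alpha^{e_1}_{m_1, m_2 - 1}
\]
(any target with an index equal to $0$ is dropped). I claim this map is surjective with one-dimensional kernel. For surjectivity, I show by induction on $N$ that every basis element $\alpha^{e_1}_{M, N}$ of $E^1_{p-1, p-1}$ (with $M + N = p - 1$ and $M, N \ge 1$) lies in the image: the base case $N = 1$ uses $\del_{E^1}(\alpha^{e_0}_{M+1, 1}) = \alpha^{e_1}_{M, 1}$, and for $N \ge 2$, the relation
\[
\del_{E^1}(\alpha^{e_0}_{M+1, N}) = N\, \alpha^{e_1}_{M, N} - (M+1)\, \alpha^{e_1}_{M+1, N-1}
\]
together with the inductive hypothesis on $\alpha^{e_1}_{M+1, N-1}$ places $\alpha^{e_1}_{M, N}$ in the image. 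An explicit kernel generator is $\sum_{m_1 + m_2 = p,\, m_1, m_2 \ge 1} (m_1 - 1)!\,(m_2 - 1)!\, \alpha^{e_0}_{m_1, m_2}$ (verified by a short telescoping calculation), and the dimension count $(p - 1) - (p - 2) = 1$ shows that this generates the kernel. Combining with Lemma \ref{lem_F_m}(ii) yields $E^\infty_{p, p-1} \cong \Q$ for $p \ge 2$ and $0$ otherwise, so $H_n(C^{\Omega, [\delta, \infty)}) \cong \Q$ precisely for $n \in \{3, 5, 7, \ldots\}$, which matches $H^{S^1}_{n-3}(\pt)$; this establishes (i).

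For (ii), the chain map $C^\Omega \to C^{\Omega'}$ is induced by the inclusion $U_\Omega \hookrightarrow U_{\Omega'}$. Both $U_\Omega$ and $U_{\Omega'}$ are contractible (each radially retracts to a smooth open arc in $(\R_{>0})^2$), so the induced map on $H_0$ is an isomorphism. Since every non-zero entry on the $E^1$-page is a copy of $H_0(U_\Omega) \otimes H_*(S^1)$, the induced map of $E^1$-pages is an isomorphism in every bidegree, and hence so is the map on $E^\infty = E^2$ and ultimately on $H^{+, \infty}_*$. The main obstacle is the rank computation for $\del_{E^1}$, which the above induction plus the explicit factorial kernel generator dispatches; everything else reduces to the case analysis on $(m_1, m_2)$ plus contractibility of $U_\Omega$.
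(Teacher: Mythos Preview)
Your proof is correct and follows essentially the same approach as the paper: compute $H_*(C^{\Omega,[\delta,\infty)})$ via the spectral sequence of Lemma~\ref{lem_F_m}, identify the $E^1$-page with the relative homologies $H_*(U_\Omega,U_\Omega(\delta:m_1,m_2))$, and show that $\partial_{E^1}:E^1_{p,p-1}\to E^1_{p-1,p-1}$ is surjective with one-dimensional kernel. The paper states the resulting short exact sequence directly, while you supply the details it omits --- the case analysis on $(m_1,m_2)$ (where the monotonicity of $m_1\cos\theta+m_2\sin\theta$ shows $f$ changes sign at most once, so any bounded component of $\{f<\delta\}$ would force a positive critical value below $\delta$), the induction for surjectivity, and the explicit kernel generator $\sum (m_1-1)!(m_2-1)!\,\alpha^{e_0}_{m_1,m_2}$, which is precisely the representative described in Corollary~\ref{cor_plus}.
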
 
\begin{proof} 
(i): 
Take $\delta \in (0, \min \spec(\Omega))$. 
Then $H^{+,\infty}_*(\Omega) \cong H_*(C^{\Omega, [\delta,\infty)})$. 
By Lemma \ref{lem_barU}, 
it is easy to show that for any 
$(m_1,m_2) \in \Z^2 \setminus (\Z_{\le 0})^2$ 
\[ 
H_*(U_\Omega, U_\Omega(\delta: m_1, m_2))
\cong
\begin{cases} 
H_*(\pt) &( (m_1, m_2) \in (\Z_{>0})^2),  \\
0 &(\text{otherwise}). 
\end{cases} 
\]

Consider the filtration $(F_m)_{m \in \Z}$ on $C^{\Omega, [\delta, \infty)}_*$ as before. 
Then, by Lemma \ref{lem_F_m} and Remark \ref{rem_E_1}, 
$E^1_{p,q} \ne 0$ only if $q-p \in \{0,-1\}$, and there exists an isomorphism
\[ 
E^1_{p, p-1+j} \cong \bigoplus_{\substack{m_1+m_2=p \\ m_1,m_2>0}} H_0(\pt) \otimes H_j(S^1). 
\] 
For any $p \ge 2$ one has an exact sequence
\[ 
\xymatrix{ 
0 \ar[r] & \Q \ar[r]_-{\star}& E^1_{p,p-1} \ar[r]_-{\del_{E^1}} & E^1_{p-1, p-1} \ar[r]& 0, 
} 
\] 
where $\star$ maps $1$ to $\sum_{j=1}^{p-1} x_{j, p-j, 0} \otimes e_0$ 
such that $x_{j, p-j, 0} \ne 0$ for any $j$. 
Hence we obtain 
\[ 
E^2_{p,q} \cong \begin{cases} \Q &(p \ge 2, \, q = p-1), \\   0 &(\text{otherwise}). \end{cases}
\] 
This implies $H_*(C^{\Omega, [\delta, \infty)}) \cong H^{S^1}_{*-3}(\pt)$. 

(ii): The natural chain map 
$C^{\Omega, [\delta,\infty)}_* \to C^{\Omega', [\delta,\infty)}_*$ 
respects the filtrations $(F_m)_{m \in \Z}$ and gives isomorphisms on $E^1$-pages. 
\end{proof} 

The next corollary follows from the above proof of Proposition \ref{prop_plus} (i). 

\begin{cor}\label{cor_plus} 
For any $\delta \in (0, \min \spec(\Omega))$ and $k \ge 1$, 
any element of $H_{2k+1}(C^{\Omega, [\delta, \infty)}) \cong \Q$
is represented by 
$x= \sum_{(m_1, m_2, i) \in (\Z^2 \setminus (\Z_{\le 0})^2) \times \{0, 1\}} x_{m_1, m_2, i} \otimes e_i \in C^{\Omega, [\delta,\infty)}_{2k+1}$
such that: 
\begin{itemize} 
\item $x_{m_1,m_2,i}=0$ unless $m_1,m_2>0$, $m_1+m_2=k+1$ and $i=0$. 
\item $x_{j, k+1-j, 0} = a_j[p] \otimes e_0$ for any $1 \le j \le k$,
where $p \in U_\Omega$ and $(a_1, \ldots, a_k)$ satisfies 
$(k-j) \cdot a_{j+1} - j \cdot a_j=0$ for any $1 \le j \le k-1$. 
\end{itemize} 
\end{cor}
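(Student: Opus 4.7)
The plan is to read off an explicit description of a generator of $E^\infty_{k+1,k}$ directly from the spectral-sequence argument already carried out in the proof of Proposition \ref{prop_plus}(i). That argument established $H_{2k+1}(C^{\Omega,[\delta,\infty)}) \cong E^\infty_{k+1,k} = E^2_{k+1,k} \cong \Q$ via the vanishing of $\del_{E^r}$ for $r \ge 2$; the corollary is essentially asking for an explicit representative of this $\Q$-generator at the chain level.

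First I would combine Lemma \ref{lem_F_m}(i) with the relative $H_0$ computation from the proof of Proposition \ref{prop_plus}(i) -- which gives $H_0(U_\Omega, U_\Omega(\delta:m_1,m_2)) \cong \Q$ when $m_1, m_2 > 0$ and vanishes otherwise -- to identify
\[
E^1_{k+1,k} \cong \bigoplus_{\substack{m_1+m_2=k+1 \\ m_1, m_2>0}} H_0\bigl(U_\Omega, U_\Omega(\delta:m_1,m_2)\bigr) \otimes H_0(S^1).
\]
Each $U_\Omega(\delta: m_1, m_2)$ is cut out by $A_{m_1,m_2} < \delta$, whereas $U_\Omega$ is unbounded, so each is a proper subset; since there are only finitely many pairs $(m_1, m_2) \in (\Z_{>0})^2$ with $m_1 + m_2 = k+1$, I would pick a single point $p \in U_\Omega$ with $A_{m_1, m_2}(p) \geq \delta$ for all such pairs (any $p$ sufficiently close to $\del\Omega$ works). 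Then $[p]$ represents the generator of every $H_0$-summand, and an arbitrary element of $E^1_{k+1,k}$ takes the form $x = \sum_{j=1}^k a_j[p] \otimes e_0$ sitting in slot $(j, k+1-j, 0)$, parametrised by $(a_1, \dots, a_k) \in \Q^k$.

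Next I would substitute this $x$ into the $\del_{E^1}$-formula of Lemma \ref{lem_F_m}(i). Because the summands at bidegree $(k,k)$ with $m_1 = 0$ or $m_2 = 0$ vanish on $E^1$, only the components at $(m_1, m_2) = (j, k-j)$ with $1 \le j \le k-1$ contribute, and a direct calculation yields
\[
(\del_{E^1} x)_{j, k-j, 1} = (-1)^{|x|}\bigl((k-j)\,a_{j+1} - j\,a_j\bigr)\,[p] \otimes e_1.
\]
Hence $x \in \ker \del_{E^1}$ iff $(k-j)\,a_{j+1} - j\,a_j = 0$ holds for every $1 \le j \le k-1$, which is precisely the recurrence in the corollary. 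By the proof of Proposition \ref{prop_plus}(i), this kernel is one-dimensional, so such a tuple $(a_1, \dots, a_k)$ is unique up to overall scaling and exhausts $E^2_{k+1,k} \cong \Q$.

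The final task is to pass from this $E^2 = E^\infty$-representative to a class in $H_{2k+1}(C^{\Omega, [\delta, \infty)})$, and this is where I expect the only delicate bookkeeping. The vanishing $\del_{E^r} = 0$ for $r \ge 2$ from Proposition \ref{prop_plus}(i) ensures that every $E^1$-cycle at bidegree $(k+1, k)$ survives to $E^\infty$ and so determines a class in the filtration piece $F_{k+1} H_{2k+1}$; since the other bidegrees contribute nothing to total degree $2k+1$, this piece is all of $H_{2k+1}(C^{\Omega, [\delta, \infty)})$, producing the required representative and completing the proof.
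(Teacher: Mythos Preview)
Your argument is exactly the unpacking the paper has in mind; the paper gives no separate proof and simply points back to the spectral-sequence computation in Proposition~\ref{prop_plus}(i), which is precisely what you reproduce. Your identification of $E^1_{k+1,k}$, the computation of $\del_{E^1}$, and the resulting recurrence are all correct.

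One remark on your final paragraph, since you flagged it yourself. The chain $x$ with \emph{only} the components $a_j[p]\otimes e_0$ at $(j,k+1-j)$ is not literally a cycle in $C^{\Omega,[\delta,\infty)}$: one computes
\[
(\del x)_{0,k,1}=(-1)^{|x|}\,k\,a_1[p],\qquad (\del x)_{k,0,1}=-(-1)^{|x|}\,k\,a_k[p],
\]
and these relative $0$-chains are nonzero unless $p\in U_\Omega(\delta{:}0,k)\cap U_\Omega(\delta{:}k,0)$, a condition one generally cannot satisfy with $p\in U_\Omega$. What saves the argument is that $H_0(U_\Omega,U_\Omega(\delta{:}0,k))=H_0(U_\Omega,U_\Omega(\delta{:}k,0))=0$, so these boundary components are themselves boundaries of $1$-chains $y_{0,k,1}$, $y_{k,0,1}$; subtracting these corrections yields a genuine cycle whose class in $E^\infty_{k+1,k}\cong H_{2k+1}$ is unchanged. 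In other words, the corollary is really a statement at the level of the associated graded (which is how it is used in Propositions~\ref{prop_property_c_k}(iv) and~\ref{prop_convex}), and that is exactly what you have established.
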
  

\section{Capacities} 

\subsection{Definition and basic properties} 

For any $\Omega \in \mca{S}^2$, 
we define a sequence $(c_k(\Omega))_{k \ge 1}$ as follows. 
For any $a \in \R_{>0}$, let 
\[ 
(i^a_\Omega)_*: H^{+,a}_*(\Omega) \to H^{+,\infty}_*(\Omega)
\] 
be  the natural map. For any $k \ge 1$, let 
\[ 
c_k(\Omega):= \inf\{ a \mid  (i^a_\Omega)_{2k+1} \ne 0 \}. 
\] 

\begin{prop}\label{prop_property_c_k}
The following holds for any $\Omega \in \mca{S}^2$ and $k \ge 1$. 
\begin{itemize} 
\item[(i):] For any $\Omega' \in \mca{S}^2$ such that $\Omega' \subset \Omega$, 
there holds $c_k(\Omega') \le c_k(\Omega)$. 
\item[(ii):] For any $c \in \R_{>0}$, $c_k(c\Omega)=c \cdot c_k(\Omega)$. 
\item[(iii):] $c_k(\Omega) \in \spec(\Omega)$.
\item[(iv):] $c_k(\Omega) \le c_{k+1}(\Omega)$. 
\end{itemize} 
\end{prop}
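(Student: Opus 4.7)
Parts (i) and (ii) are formal. For (i), the $\R$-filtered chain map $C^\Omega_*\to C^{\Omega'}_*$ of the paragraph after (\ref{U_Omega_length}) fits into a naturality square with $i^a_\Omega$ and $i^a_{\Omega'}$ whose right column is the isomorphism of Proposition \ref{prop_plus}(ii); this shows that $(i^a_\Omega)_{2k+1}\ne 0$ implies $(i^a_{\Omega'})_{2k+1}\ne 0$, giving the monotonicity. For (ii), the map $(x_1,x_2)\mapsto(cx_1,cx_2)$ induces a chain isomorphism $C^\Omega_*\xrightarrow{\sim} C^{c\Omega}_*$ identifying the subspace $C^{\Omega,a}_*$ with $C^{c\Omega,ca}_*$, whence the scaling of capacities.

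For (iii), I argue by contradiction. If $c_k(\Omega)\notin\spec(\Omega)$, closedness of $\spec(\Omega)$ gives an open interval $I\ni c_k(\Omega)$ disjoint from $\spec(\Omega)$. For any $a<a'$ in $I$, Proposition \ref{prop_01}(i) yields $H_*(C^{\Omega,[a,a')})=0$, and the long exact sequence of $0\to C^{\Omega,[\delta,a)}\to C^{\Omega,[\delta,a')}\to C^{\Omega,[a,a')}\to 0$, after passing to the inverse limit $\delta\to 0+$, makes $H^{+,a}_*(\Omega)\to H^{+,a'}_*(\Omega)$ an isomorphism compatible with the maps to $H^{+,\infty}_*(\Omega)$. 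Hence the rank of $(i^a_\Omega)_{2k+1}$ is constant on $I$; but this rank equals $0$ for $a<c_k(\Omega)$ and $1$ for $a>c_k(\Omega)$ (by monotonicity of $\mathrm{Im}\, i^a_\Omega$ in $a$, together with one-dimensionality of $H^{+,\infty}_{2k+1}(\Omega)$), a contradiction.

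For (iv), my plan is to realize, at the chain level, the $U$-map on $H^{+,\infty}_*(\Omega)\cong H^{S^1}_{*-3}(\pt)$ by the formula
\[
(Ux)_{m_1,m_2,i}:=x_{m_1+1,m_2,i}+x_{m_1,m_2+1,i}.
\]
A direct computation from the formula for $\del$ shows $\del U=U\del$, and (\ref{U_Omega_length}) implies that $U$ preserves the $\R$-filtration. Hence $U$ descends to chain maps on every $C^{\Omega,[\delta,a)}_*$, inducing $U_*$ on $H^{+,a}_*(\Omega)$ that commutes with the natural maps to $H^{+,\infty}_*(\Omega)$. Applying $U$ to the cycle $\xi_{k+1}=\sum_{j=1}^{k+1}a_j[p]\otimes e_0$ furnished by Corollary \ref{cor_plus}, one finds that $U\xi_{k+1}$ has leading components $(a_{m_1}+a_{m_1+1})[p]\otimes e_0$ at positions $(m_1,k+1-m_1)$ for $1\le m_1\le k$, plus extra components at $(0,k+1)$ and $(k+1,0)$ that vanish in the relevant $E^1$-summand by the analysis of Proposition \ref{prop_plus}(i); the recursion $(k+1-j)a_{j+1}=ja_j$ then gives that $(a_{m_1}+a_{m_1+1})$ satisfies the compatibility for $\xi_k$ up to a common nonzero scalar, so $U_*[\xi_{k+1}]\ne 0$ in $H^{+,\infty}_{2k+1}(\Omega)$. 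Finally, for $a>c_{k+1}(\Omega)$ one lifts the generator of $H^{+,\infty}_{2k+3}(\Omega)$ to some $\tilde\alpha\in H^{+,a}_{2k+3}(\Omega)$; then $U_*\tilde\alpha\in H^{+,a}_{2k+1}(\Omega)$ maps to the nonzero class $U_*[\xi_{k+1}]$, forcing $(i^a_\Omega)_{2k+1}\ne 0$ and hence $c_k(\Omega)\le a$. Letting $a\to c_{k+1}(\Omega)$ concludes (iv).

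The main obstacle is the verification of $\del U=U\del$ and the explicit computation of $U_*[\xi_{k+1}]$: both are direct but require careful bookkeeping with signs, the chain-level corrections implicit in Corollary \ref{cor_plus}, and the vanishing of $H_0(U_\Omega,U_\Omega(\delta:0,m))$ used to discard the extra summands in $U\xi_{k+1}$.
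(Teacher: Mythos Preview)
Your proof is correct and follows essentially the same approach as the paper: the same naturality argument for (i), the same scaling isomorphism for (ii), the same use of Proposition \ref{prop_01}(i) together with closedness of $\spec(\Omega)$ for (iii), and the very same chain map $(Ux)_{m_1,m_2,i}=x_{m_1+1,m_2,i}+x_{m_1,m_2+1,i}$ for (iv). Your treatment of (iv) is more explicit than the paper's one-line appeal to Corollary \ref{cor_plus}, but the content is the same; in particular your claimed recursion check goes through, since with $a_j=(j-1)!(k+1-j)!$ one gets $b_j:=a_j+a_{j+1}=(k+1)(j-1)!(k-j)!$, which indeed satisfies $(k-j)b_{j+1}=jb_j$.
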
 
\begin{proof} 
(i): For any $0 < a \le \infty$, let $j^a: H^{+,a}_*(\Omega) \to H^{+,a}_*(\Omega')$ be the natural map. 
Then $c_k(\Omega') \le c_k(\Omega)$ since 
$i^a_{\Omega'} \circ j^a=j^\infty \circ i^a_\Omega$ 
and $j^\infty$ is an isomorphism by Proposition \ref{prop_plus} (ii). 

(ii) follows from the isomorphism $H^{+,a}_*(\Omega) \cong H^{+, ca}_*(c\Omega)$ defined for any $0<a \le \infty$ and $c>0$, 
which is defined by the scaling diffeomorphism $U_\Omega \to U_{c\Omega}; \, (x_1,x_2) \mapsto (cx_1, cx_2)$. 

(iii) follows from Proposition \ref{prop_01} (i) and $\spec(\Omega)$ is a closed set. 

(iv): Let us define a linear map $u:C^\Omega_* \to C^\Omega_{*-2}$ by 
\[ 
(ux)_{m_1,m_2, i}: = x_{m_1+1, m_2, i} + x_{m_1, m_2+1, i} \qquad(i=0, 1). 
\] 
By direct computations one can check that $u$ commutes with the boundary map on $C^\Omega$. 
$u$ respects the $\R$-filtration on $C^\Omega$ by (\ref{U_Omega_length}). 
Also Corollary \ref{cor_plus} implies that 
$H_*(u): H_{2k+3}(C^{\Omega, [\delta,\infty)}) \to H_{2k+1}(C^{\Omega, [\delta,\infty)})$ 
is an isomorphism for any $k \ge 1$, 
which implies that $c_k(\Omega) \le c_{k+1}(\Omega)$. 
\end{proof} 

\begin{rem} 
It is not clear to the author whether
$H_*(u)$ corresponds to the $U$-map (as defined in \cite{Gutt_Hutchings}) or not. 
\end{rem}

\subsection{Conjectural relation to Gutt-Hutchings capacities}

For any Liouville domain $(X,\lambda)$, 
Gutt-Hutchings \cite{Gutt_Hutchings} defined a sequence 
$(c^{\GH}_k(X,\lambda))_{k \ge 1}$ called Gutt-Hutchings capacities.
See \cite{Gutt_Hutchings} Definitions 4.1 and 4.4 for the definition of the capacities for general Liouville domains.

Let $n$ be a positive integer, and 
$X$ be a star-shaped domain in $\C^n$. 
We abbreviate $c^{\GH}_k(X, \lambda_0)$ by $c^{\GH}_k(X)$. 
For any $0 < a \le \infty$, let 
$\CH^a_*(X):= \varprojlim_{\delta \to 0+} \SH^{S^1,[\delta,a)}_*(X)$. 
There exists a natural map 
$\delta: \CH^\infty_{*-n+1}(X) \to  H_*(X, \del X) \otimes H^{S^1}_*(\pt)$; 
see \cite{Gutt_Hutchings} Section 3. 

For any $a \in \R_{>0}$, let 
$i^a: \CH^a_*(X) \to \CH^\infty_*(X)$ 
be the natural map. Then 
\[ 
c^{\GH}_k(X) = \inf\{ a \mid (i^a)_{2k+1} \ne 0\} 
\] 
for any $k \ge 1$. This follows from the following facts: 
\begin{itemize} 
\item $\delta: \CH^\infty_{n+1}(X) \to H_{2n}(X, \del X) \otimes H^{S^1}_0(\pt)$ is an isomorphism. 
\item $U: \CH^\infty_{n+2k+1}(X) \to \CH^\infty_{n+2k-1}(X)$ is an isomorphism for any $k \ge 1$; 
see \cite{Gutt_Hutchings} Section 6.3 for the definition of the $U$-map. 
\end{itemize} 
It is now clear that Conjecture \ref{conj_main} implies the following conjecture. 

\begin{conj}\label{conj_capacity}
For any $\Omega \in \mca{S}^2$ and $k \ge 1$, 
there holds $c_k(\Omega)=c^{\GH}_k(X_\Omega)$. 
\end{conj}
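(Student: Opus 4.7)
The plan is to deduce Conjecture \ref{conj_capacity} directly from Conjecture \ref{conj_main} by matching, degree by degree, the two descriptions of the capacities as infima over thresholds at which a transfer-induced map in degree $2k+1$ becomes nonzero.

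First I would package Conjecture \ref{conj_main} into an isomorphism of the whole ``positive'' persistence structures. Fix $0 < a \le \infty$. By Proposition \ref{prop_01} (i), for $0<\delta'<\delta<\min(a,\min\spec(\Omega))$ the natural map $H_*(C^{\Omega,[\delta,a)}) \to H_*(C^{\Omega,[\delta',a)})$ is an isomorphism, so the inverse system defining $H^{+,a}_*(\Omega)$ stabilizes and $H^{+,a}_*(\Omega) \cong H_*(C^{\Omega,[\delta,a)})$ for all sufficiently small $\delta>0$. The same stabilization is expected on the symplectic side, and is in any case forced by the isomorphism in Conjecture \ref{conj_main}; hence taking the inverse limit of $F^{\delta,a}_\Omega$ as $\delta \to 0+$ yields a well-defined isomorphism
\[
\widetilde F^a_\Omega : H^{+,a}_*(\Omega) \xrightarrow{\ \cong\ } \CH^a_*(X_\Omega).
\]

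Next I would check naturality in $a$. Applying the commutative square in Conjecture \ref{conj_main} with $\Omega' = \Omega$, $a' = \delta$, $b = a$, $b' = \infty$ (and sending $\delta \to 0+$) gives the commutative diagram
\[
\xymatrix{
H^{+,a}_*(\Omega) \ar[r]^-{i^a_\Omega} \ar[d]_-{\widetilde F^a_\Omega}^-{\cong} & H^{+,\infty}_*(\Omega) \ar[d]^-{\widetilde F^\infty_\Omega}_-{\cong} \\
\CH^a_*(X_\Omega) \ar[r]^-{i^a} & \CH^\infty_*(X_\Omega).
}
\]
Both vertical arrows are isomorphisms of $\Z$-graded vector spaces, so $(i^a_\Omega)_{2k+1} \ne 0$ if and only if $(i^a)_{2k+1} \ne 0$. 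Taking the infimum over such $a$ on both sides gives $c_k(\Omega) = c^{\GH}_k(X_\Omega)$.

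The one point that needs a sanity check rather than ``sliding symbols'' is that the degree shifts line up correctly. For $n=2$, Proposition \ref{prop_plus} (i) gives $H^{+,\infty}_*(\Omega) \cong H^{S^1}_{*-3}(\pt)$, which is concentrated in odd degrees $\geq 3$, and on the Floer side $\CH^\infty_*(X_\Omega) \cong H^{S^1}_{*-3}(\pt)$ via the isomorphism $\delta$ combined with the $U$-map, as the paper already recalled before stating Conjecture \ref{conj_capacity}; under $\widetilde F^\infty_\Omega$ the degree $2k+1$ generator on the chain model side corresponds to the generator of $\CH^\infty_{2k+1}(X_\Omega)$ that detects $c^{\GH}_k$. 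This degree-matching is the only nontrivial ingredient and is exactly what the grading conventions built into Section 2 are designed to guarantee.

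The main obstacle is not in this derivation at all — granting Conjecture \ref{conj_main}, everything is essentially formal — but rather in Conjecture \ref{conj_main} itself, whose proof would require the Floer-to-Morse reduction sketched after its statement (careful choice of $H$, the almost complex structures $J^\ep$, and a count of genuine Floer trajectories degenerating to finite-dimensional gradient flow lines on $(\R_{>0})^2$ as $\ep \to 0$). The proposal above simply shows that, once that analytic input is in place, the equality of capacities comes for free.
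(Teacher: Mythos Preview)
Your proposal is correct and matches the paper's approach exactly: the paper does not prove Conjecture \ref{conj_capacity} but simply remarks, in the sentence preceding its statement, that ``it is now clear that Conjecture \ref{conj_main} implies the following conjecture,'' having just recast $c^{\GH}_k(X)$ as $\inf\{a \mid (i^a)_{2k+1}\ne 0\}$. You have faithfully unpacked this one-line implication --- passing to inverse limits, using naturality of $F^{a,b}_\Omega$ to match $i^a_\Omega$ with $i^a$, and checking the degree alignment via Proposition \ref{prop_plus} --- and correctly identified that the only substantive content lies in Conjecture \ref{conj_main} itself.
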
 

\subsection{Computations for concave and (weakly) convex domains} 

In this subsection, we compute the capacities $c_k(\Omega)$ when $\Omega \in \mca{S}^2$ is concave or weakly convex. 

We say $\Omega$ is weakly convex if it is a convex subset of $\R^2$,  
and $\Omega$ is concave if $U_\Omega = (\R_{>0})^2 \setminus \Omega$ is a convex subset of $\R^2$. 
By comparing our computations with 
formulas by Gutt-Hutchings (Theorems 1.6 and 1.14 in \cite{Gutt_Hutchings}), 
we can verify Conjecture \ref{conj_capacity} 
when $\Omega$ is concave or strongly convex. 
Here we say $\Omega$ is strongly convex if 
$\{(x_1, x_2) \in \R^2 \mid (|x_1|, |x_2|) \in \Omega\}$ 
is a convex subset of $\R^2$. 

Recall that $\bar{U}_\Omega$ denotes the closure of $U_\Omega$. 

\begin{prop}\label{prop_concave} 
If $\Omega \in \mca{S}^2$ is concave, then for any $k \ge 1$ 
\[ 
c_k(\Omega) = \max_{1 \le j \le k} ( \min_{p \in \bar{U}_\Omega}   A_{j, k+1-j}(p) ). 
\] 
\end{prop}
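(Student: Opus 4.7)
The plan is to analyse $H^{+,a}_{2k+1}(\Omega) \cong H_{2k+1}(C^{\Omega,[\delta,a)})$ for fixed $\delta \in (0, \min\spec(\Omega))$ via the spectral sequence of Lemma~\ref{lem_F_m}, and to locate the threshold value of $a$ at which the inclusion-induced map to $H^{+,\infty}_{2k+1}(\Omega) \cong \Q$ first becomes non-zero. Write $\alpha_{m_1,m_2} := \min_{p \in \bar U_\Omega} A_{m_1,m_2}(p)$; the aim is $c_k(\Omega) = \max_{1 \le j \le k} \alpha_{j,k+1-j}$.

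Concavity of $\Omega$ makes $U_\Omega$, and hence each sublevel set $U_\Omega(c:m_1,m_2)$, convex and therefore contractible whenever non-empty. Combined with Lemma~\ref{lem_barU}, this forces $H_i(U_\Omega(a:m_1,m_2), U_\Omega(\delta:m_1,m_2))$ to equal $\Q$ in degree $0$ exactly when the larger set is non-empty and the smaller is empty, and to vanish otherwise. For $\delta < \min\spec(\Omega)$ one checks that the smaller set is empty precisely when $m_1,m_2 > 0$ and $\delta \le \alpha_{m_1,m_2}$: if either index is $\le 0$, then $A_{m_1,m_2}$ takes values $\le 0$ on the relevant coordinate axis within $\bar U_\Omega$, so $U_\Omega(\delta:m_1,m_2)$ is non-empty for any positive $\delta$. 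Consequently, by Lemma~\ref{lem_F_m}(i), $E^1_{p,q}$ is concentrated on $q \in \{p-1,p\}$, with basis at level $p$ indexed by $\{(m_1,m_2) \in (\Z_{>0})^2 : m_1+m_2 = p,\, \alpha_{m_1,m_2} < a\}$. The obvious monotonicities $\alpha_{m_1,m_2} \le \alpha_{m_1+1,m_2},\, \alpha_{m_1,m_2+1}$ imply that a generator $f_{(j',k-j')}$ at level $k$ is present unless both $e_{(j',k+1-j')}$ and $e_{(j'+1,k-j')}$ at level $k+1$ are absent.

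By Lemma~\ref{lem_F_m}(ii), $H_{2k+1}(C^{\Omega,[\delta,a)}) = \ker(d_1: E^1_{k+1,k} \to E^1_{k,k})$. The formula $d_1 e_{(m_1,m_2)} = \pm(m_2 f_{(m_1-1,m_2)} - m_1 f_{(m_1,m_2-1)})$ from Lemma~\ref{lem_F_m}(i) (with boundary-index $f$'s dropped), combined with the monotonicity observation, reduces the cycle condition on $x = \sum_{j=1}^k c_j e_{(j,k+1-j)}$ (setting $c_j := 0$ for absent basis elements) to the full recurrence $(k-j)c_{j+1} = j c_j$ for $1 \le j \le k-1$, subject to $c_j = 0$ whenever $\alpha_{j,k+1-j} \ge a$. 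When every $\alpha_{j,k+1-j} < a$, the recurrence has a one-parameter family of non-zero solutions, matching the generator of Corollary~\ref{cor_plus}, and the inclusion carries it isomorphically onto $H^{+,\infty}_{2k+1}(\Omega)$. When $\alpha_{j_0,k+1-j_0} \ge a$ for some $j_0$, the forced $c_{j_0} = 0$ propagates through the recurrence---all multipliers being non-zero---to $c_j = 0$ for every $j$, so $H^{+,a}_{2k+1}(\Omega) = 0$ and the comparison map is zero. Infimising over $a$ then gives $c_k(\Omega) = \max_{1 \le j \le k} \alpha_{j,k+1-j}$. The technical heart is the case analysis identifying when $U_\Omega(\delta:m_1,m_2)$ is empty, which crucially uses concavity through convexity of $U_\Omega$; the subsequent spectral sequence computation is organised linear algebra.
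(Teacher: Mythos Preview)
Your argument is correct and follows essentially the same route as the paper: both exploit that concavity makes $U_\Omega$ and its sublevel sets convex, so Remark~\ref{rem_E_1} applies and the spectral sequence of Lemma~\ref{lem_F_m} reduces the computation of $H_{2k+1}$ to the kernel of $\del_{E^1}: E^1_{k+1,k}\to E^1_{k,k}$, which is governed by the recurrence of Corollary~\ref{cor_plus}. Your organization is slightly more direct than the paper's---you work with $C^{\Omega,[\delta,a)}$ for all $a$ at once and avoid both the preliminary reduction to $\Omega\in\mca{S}^2_\nice$ and the $\pm\ep$ sandwich, whereas the paper handles the upper bound by showing $H_{\le 2k+2}(C^{\Omega,[a+\ep,\infty)})=0$---but the underlying mechanism is identical.
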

\begin{proof} 
We may assume that $\Omega \in \mca{S}^2_\nice$. 
This is because for any concave $\Omega \in \mca{S}^2$ and for any $\ep>0$, 
there exists $\Omega' \in \mca{S}^2_\nice$ which is concave and satisfies 
$\Omega \subset \Omega' \subset (1+\ep)\Omega$. 

Let $a:=\max_{1 \le j \le k} ( \min_{p \in \bar{U}_\Omega}   A_{j, k+1-j}(p) )$. 
To prove $c_k(\Omega)=a$, 
it is sufficient to show 
$a-\ep \le c_k(\Omega) \le a+\ep$
for any $\ep>0$. 

Let us prove $c_k(\Omega) \le a+\ep$. 
Consider the filtration on $C^{\Omega, [a+\ep,\infty)}_*$ as in Lemma \ref{lem_F_m}. 
Since 
$U_\Omega$ and $U_\Omega(a+\ep:m_1,m_2)$ are both convex, we obtain 
\[ 
H_*(U_\Omega, U_\Omega(a+\ep:m_1,m_2)) \cong 
\begin{cases} 
H_*(\pt) &( U_\Omega(a+\ep:m_1,m_2) = \emptyset),  \\ 
0 &(U_\Omega(a+\ep:m_1,m_2) \ne \emptyset). 
\end{cases}
\] 
Moreover, if 
$m_1+m_2 \le k+1$ then 
$\min_{p \in \bar{U}_\Omega} A_{m_1,m_2}(p) \le a$, 
thus $U_\Omega(a+\ep:m_1,m_2) \ne \emptyset$. 
By Remark \ref{rem_E_1}, 
if $E^1_{p,q} \ne 0$ then 
$q \in \{p-1, p\}$ and $p \ge k+2$, 
thus $q+p \ge 2k+3$. 
Hence $H_{\le 2k+2}(C^{\Omega, [a+\ep,\infty)})=0$. 
This implies that 
$H_{2k+1}(C^{\Omega,[\delta, a+\ep)}) \to H_{2k+1}(C^{\Omega, [\delta,\infty)})$ 
is isomorphic for any $\delta \in (0, a+\ep)$, 
thus $c_k(\Omega) \le a+\ep$. 

Let us prove $c_k(\Omega) \ge a-\ep$. 
Take $\delta>0$ sufficiently close to $0$ and 
consider the filtration on $C^{\Omega, [\delta, a-\ep)}_*$ as in Lemma \ref{lem_F_m}. 
Then 
\[ 
E^1_{p,p-1+j} \cong \bigoplus_{\substack{m_1+m_2=p \\ m_1,m_2>0}}  H_0(U_\Omega(a-\ep:m_1,m_2), U_\Omega(\delta: m_1,m_2)) \otimes H_j(S^1). 
\] 
This is because $H_0(U_\Omega(a-\ep:m_1,m_2), U_\Omega(\delta: m_1,m_2))=0$ unless $m_1, m_2>0$. 
Moreover 
$\del_{E^1}: E^1_{p, p-1} \to E^1_{p-1, p-1}$ is given by
\[ 
(\del_{E_1} x)_{m_1, m_2, 1} =  m_1 \cdot x_{m_1, m_2+1, 0} - m_2 \cdot x_{m_1+1, m_2, 0}. 
\] 
There exists $1 \le j \le k$ such that 
$a-\ep < \min_{p \in \bar{U}_\Omega} A_{j, k+1-j}(p)$, 
then 
$U_\Omega(a-\ep: j, k+1-j) = \emptyset$. 
This implies
\[ 
\mathrm{Ker}( \del_{E^1}: E^1_{k+1,k} \to E^1_{k,k})=0, 
\] 
then $H_{2k+1}(C^{\Omega, [\delta, a-\ep)})=0$. 
This implies $c_k(\Omega) \ge a-\ep$. 
\end{proof} 

\begin{prop}\label{prop_convex} 
If $\Omega \in \mca{S}^2$ is weakly convex, then for any $k \ge 1$ 
\[ 
c_k(\Omega) = \min_{0 \le j \le k} ( \max_{p \in \Omega} A_{j, k-j}(p)). 
\] 
\end{prop}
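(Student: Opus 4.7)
The plan follows the strategy of Proposition \ref{prop_concave} closely. After an approximation argument reducing to $\Omega \in \mca{S}^2_\nice$ with $\Omega$ weakly convex, I would set $a := \min_{0 \le j \le k} \max_{p \in \Omega} A_{j, k-j}(p)$ and prove the two-sided bound $a - \epsilon \le c_k(\Omega) \le a + \epsilon$ for each $\epsilon > 0$. Write $R_1 := \rho_\Omega(0)$ and $R_2 := \rho_\Omega(\pi/2)$.

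The essential new topological input, compared with the concave case, is a description of the relative homology groups $H_*(U_\Omega(b:m_1, m_2), U_\Omega(a: m_1, m_2))$. Using Lemma \ref{lem_barU} I would replace $\bar U_\Omega(c)$ by the homotopy-equivalent curve $\del_+ \Omega(c) \subset \del_+ \Omega \cong \R$ and analyse how the sublevel set $\del_+\Omega(c:m_1,m_2)$ varies with $c$. For $m_1, m_2 > 0$ (the only indices giving nontrivial contributions when $c > 0$), the restriction $A_{m_1,m_2}|_{\del_+\Omega}$ has two local minima $m_1 R_1$ and $m_2 R_2$ at the corners, separated by a single interior local maximum of value $\alpha(m_1,m_2) := \max_\Omega A_{m_1,m_2}$ on the arc. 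Hence $\del_+\Omega(c:m_1,m_2)$ has $0, 1, 2$ or $1$ connected components as $c$ crosses these three critical values, which fully determines the relative homology.

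For the upper bound $c_k(\Omega) \le a + \epsilon$ I would show $H_{2k+1}(C^{\Omega, [a+\epsilon, \infty)}) = 0$, which by the long exact sequence of the pair and Proposition \ref{prop_plus} forces $H_{2k+1}(C^{\Omega, [\delta, a+\epsilon)}) \to H_{2k+1}(C^{\Omega, [\delta, \infty)}) \cong \Q$ to be surjective. Applying the spectral sequence of Lemma \ref{lem_F_m} with $(b, a) = (\infty, a+\epsilon)$, only two positions can contribute to total degree $2k+1$: the $H_0$-position $(p, q) = (k+1, k)$ (nonzero precisely when $a + \epsilon \le \min(m_1 R_1, m_2 R_2)$ at some $m_1 + m_2 = k+1$) and the $H_1$-position $(p, q) = (k, k+1)$ (nonzero when $\max(m_1 R_1, m_2 R_2) < a+\epsilon < \alpha(m_1, m_2)$ at $m_1 + m_2 = k$). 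I would rule out the $H_0$-position by the integer identity
\[
\min_{0 \le j \le k} \max(j R_1, (k-j) R_2) \;=\; \max_{\substack{m_1 + m_2 = k+1 \\ m_1, m_2 \ge 1}} \min(m_1 R_1, m_2 R_2),
\]
combined with the lower bound $a \ge \min_j \max(j R_1, (k-j) R_2)$ (obtained by evaluating $\max_\Omega A_{j, k-j}$ at the two corners $(\rho_\Omega(0), 0)$ and $(0, \rho_\Omega(\pi/2))$). Any surviving $H_1$-contribution at $(k, k+1)$ is then identified as the $\del_{E^1}$-image of a corresponding $H_1$-contribution at $(p, q) = (k+1, k+1)$: for each $(m_1, m_2)$ triggering the $H_1$ condition at $m_1+m_2=k$, at least one of $(m_1+1, m_2)$ or $(m_1, m_2+1)$ also satisfies it, and the explicit formula for $\del_{E^1}$ yields a nonzero boundary, giving cancellation in $E^2$.

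The lower bound $c_k(\Omega) \ge a - \epsilon$ is handled analogously, showing $H_{2k+1}(C^{\Omega, [\delta, a-\epsilon)}) = 0$ for $0 < \delta < \min\spec(\Omega)$. Since $U_\Omega(\delta: m_1, m_2) = \emptyset$ for all relevant $(m_1, m_2)$, one has $H_*(U_\Omega(a-\epsilon), U_\Omega(\delta)) \cong H_*(U_\Omega(a-\epsilon))$, concentrated in $H_0$ with rank equal to the number of components of $\del_+\Omega(a-\epsilon: m_1, m_2)$ (which is $0, 1,$ or $2$). The spectral sequence analysis then proceeds in the spirit of the concave argument, combining the strict inequalities $\alpha(j, k-j) > a - \epsilon$ for all $j$ with the explicit $\del_{E^1}$-formula. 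The main obstacle I foresee in both bounds is the delicate bookkeeping of $\del_{E^1}$-cancellations: unlike the concave case in which sublevel sets of $U_\Omega$ are convex (hence at most one component), the weakly convex setting admits two-component sublevel sets, requiring a careful combinatorial analysis of how $\del_{E^1}$ pairs contributions across neighbouring indices in the $(m_1, m_2)$-lattice.
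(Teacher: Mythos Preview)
Your two halves diverge from the paper in different ways.

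\textbf{Upper bound.} The paper does \emph{not} compute $H_{2k+1}(C^{\Omega,[a+\epsilon,\infty)})$ at all. Instead it picks the minimising $j$, encloses $\Omega$ in a concave $\Omega'\subset\{A_{j,k-j}\le a+\epsilon\}$, and applies the already-proved Proposition \ref{prop_concave} to $\Omega'$; the argument then reduces to the elementary inequality $\min_{A_{j,k-j}(p)\ge a+\epsilon} A_{i,k+1-i}(p)\le a+\epsilon$ for each $1\le i\le k$, bypassing the spectral sequence entirely. Your direct route is viable, but the sketch has a gap: the claim ``for each $(m_1,m_2)$ with $H_1\ne 0$ at level $k$, at least one neighbour at level $k+1$ also has $H_1\ne 0$'' is a \emph{local} statement and does not by itself give surjectivity of $\del_{E^1}\colon E^1_{k+1,k+1}\to E^1_{k,k+1}$. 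To close it one must (i) check that the only index at level $k$ where both neighbours fail is $j^*$ itself (where $H_1=0$ since $\alpha(j^*,k-j^*)=a<a+\epsilon$), and (ii) solve the resulting triangular linear system globally, using the free variable created by the absent target at $j^*$. Alternatively, once you have $E^1_{k+1,k}(C^{\Omega,[a+\epsilon,\infty)})=0$ from your integer identity, you can avoid the $H_1$-analysis altogether: the quotient map $H_{2k+1}(C^{\Omega,[\delta,\infty)})\to H_{2k+1}(C^{\Omega,[a+\epsilon,\infty)})$ respects the $F_m$-filtration, and by Proposition \ref{prop_plus} the source sits in bidegree $(k+1,k)$, so the map is already zero.

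\textbf{Lower bound.} Your plan is in the same spirit as the paper's, but ``combining $\alpha(j,k-j)>a-\epsilon$ with the explicit $\del_{E^1}$-formula'' omits the main device. The paper writes every class in $H_0(\bar U_\Omega(a-\epsilon:j,k+1-j),\bar U_\Omega(\delta:j,k+1-j))$ as $[a^1_j p_1+a^2_j p_2]$ via the corner points $p_1=(\rho_\Omega(0),0)$, $p_2=(0,\rho_\Omega(\pi/2))$, and runs an induction on $j$: from $[a^1_0 p_1]=0$ and the cycle relation $(\del_{E^1}x)_{j,k-j}=0$ one deduces $[a^1_{j+1}p_1]=0$, the crucial input being that $[p_1]$ and $[p_2]$ remain independent in $H_0$ at level $(j,k-j)$ because any path in $\bar U_\Omega$ from $p_1$ to $p_2$ must cross $A_{j,k-j}\ge\max_\Omega A_{j,k-j}\ge a>a-\epsilon$. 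This convexity-driven separation of the two components is precisely the ``delicate bookkeeping'' you anticipate but do not supply.
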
 
\begin{proof} 
We may assume that $\Omega \in \mca{S}^2_\nice$. 
This is because for any weakly convex $\Omega \in \mca{S}^2$ and for any $\ep>0$, 
there exists $\Omega' \in \mca{S}^2_\nice$ which is weakly convex and satisfies 
$\Omega \subset \Omega' \subset (1+\ep)\Omega$. 

Let $a:=\min_{0 \le j \le k} ( \max_{p \in \Omega} A_{j, k-j}(p))$. 
It is sufficient to show that, for any $\ep>0$ there holds 
$a-\ep \le c_k(\Omega) \le a+\ep$. 

Let us prove $c_k(\Omega) \le a+\ep$. 
Take $j \in \{0, \ldots, k\}$ so that 
$\max_{p \in \Omega} A_{j,k-j}(p)=a$. 
We fix such $j$ in the following argument. 

There exists $\Omega' \in \mca{S}^2$ which is concave and 
\[ 
\Omega \subset \Omega' \subset \{ p \in (\R_{\ge 0})^2 \mid A_{j, k-j}(p) \le a+\ep\}. 
\] 
Then 
\[ 
c_k(\Omega) \le c_k(\Omega') \le \max_{1 \le i \le k} \big( \min_{A_{j, k-j}(p) \ge a+\ep}  A_{i, k+1-i}(p) \big),
\] 
where the second inequality follows from Proposition \ref{prop_concave}. 
Thus it is sufficient to show 
\begin{equation}\label{eqn_j_k_j}
\min_{A_{j,k-j}(p) \ge a+\ep}   A_{i, k+1-i} (p) \le a+\ep
\end{equation} 
for any $i \in \{1, \ldots, k\}$. 
When $j=0$, the LHS is equal to $\frac{(k+1-i)(a+\ep)}{k}$. 
When $j=k$, the LHS is equal to $\frac{i(a+\ep)}{k}$. 
Thus (\ref{eqn_j_k_j}) holds when $j=0$ or $j=k$. 

Let us consider the case $0<j<k$. 
By $(i-j) + (k+1-i) -(k-j)=1$, we obtain 
\[ 
\min\{ i-j,   (k+1-i)-(k-j)\} \le 0 \iff \min\{ i/j, (k+1-i)/(k-j) \} \le 1. 
\] 
Then 
\[ 
\min_{A_{j, k-j}(p) \ge a+\ep}  A_{i, k+1-i}(p)  = \min \{i/j, (k+1-i)/(k-j) \} \cdot (a+\ep) \le a+\ep. 
\] 
This completes the proof of $c_k(\Omega) \le a+\ep$.  

Let us prove $c_k(\Omega) \ge a-\ep$. 
It is sufficient to show that the image of 
\begin{equation}\label{eqn_convex} 
H_{2k+1}(C^{\Omega, [\delta, a-\ep)}) \to H_{2k+1}(C^{\Omega, [\delta, \infty)})
\end{equation} 
is zero for any $\delta>0$ sufficiently close to $0$. 

Let us first notice that for any $j \in \Z$ 
\[ 
H_*(\bar{U}_\Omega(a-\ep:j, k+1-j), \bar{U}_\Omega(\delta:j, k+1-j))
\cong
H_*(\del_+\Omega(a-\ep:j,k+1-j), \del_+\Omega(\delta:j,k+1-j))
\]
by Lemma \ref{lem_barU}. 
Then we have the following observations: 
\begin{itemize} 
\item[(a):] $H_*(\bar{U}_\Omega(a-\ep:j, k+1-j), \bar{U}_\Omega(\delta:j, k+1-j))=0$ unless $*=0$.
\item[(b):] Any element of $H_0(\bar{U}_\Omega(a-\ep:j, k+1-j), \bar{U}_\Omega(\delta:j, k+1-j))$ 
can be written as $[a^1p_1+a^2p_2]$ with $a^1, a^2 \in \Q$, where 
$p_1:=(\rho(0), 0)$ and $p_2:=(0, \rho(\pi/2))$.  
\end{itemize} 
(a) holds since $\Omega$ is convex and $a-\ep < a \le \max_{p \in \Omega} A_{j, k+1-j}(p)$. 
(b) holds since $\Omega$ is convex. 

Now consider the filtration on $C^{\Omega, [\delta, a-\ep)}_*$ as in Lemma \ref{lem_F_m}. 
By (a) and Remark \ref{rem_E_1}, 
if 
$E^1_{p,q} \ne 0$ and $p+q=2k+1$, 
then $p=k+1$, $q=k$. 
Moreover there exists a natural isomorphism
\[ 
E^1_{k+1,k} \cong \bigoplus_{j \in \Z}  H_0(\bar{U}_\Omega(a-\ep:j, k+1-j), \bar{U}_\Omega(\delta:j, k+1-j)) \otimes  H_0(S^1). 
\]  
Note that we can replace $U_\Omega$ with $\bar{U}_\Omega$ due to Lemma \ref{lem_barU}. 

We are going to prove the following claim: 
\begin{quote} 
If $x= \sum_{j \in \Z}  x_{j, k+1-j} \otimes e_0 \in E^1_{k+1,k}$ satisfies 
$\del_{E^1}(x)=0$, then $x_{j, k+1-j}=0$ for any $1 \le j \le k$. 
\end{quote} 
By (b), for any $j \in \Z$ there exist $a^1_j, a^2_j \in \Q$ such that 
\[ 
x_{j, k+1-j} = [a^1_j p_1 + a^2_j p_2] \in
H_0(\bar{U}_\Omega(a-\ep:j, k+1-j), \bar{U}_\Omega(\delta: j, k+1-j)). 
\] 
Let us prove $[a^1_jp_1]=0$ for $0 \le j \le k$. 
$[a^1_0p_1]=0$ since $[p_1]=0$ in 
$H_0(\bar{U}_\Omega(a-\ep:0, k+1), \bar{U}_\Omega(\delta:0,k+1))$. 
Then it is sufficient to show that 
if $0 \le j \le k-1$ and $[a^1_jp_1]=0$ then $[a^1_{j+1}p_1]=0$. 
This follows from 
$(\del_{E^1}x)_{j, k-j}=0$, $k-j \ne 0$ and the following claim: 
$[\alpha p_1 + \beta p_2]=0 \implies [\alpha p_1]=[\beta p_2]=0$ in 
$H_0(\bar{U}_\Omega(a-\ep:j,k-j), \bar{U}_\Omega(\delta:j,k-j))$. 
This claim holds since for 
$c \in C^0([0,1], \bar{U}_\Omega)$ 
such that $c(0)=p_1$ and $c(1)=p_2$, 
there holds 
\[ 
\max_{t \in [0,1]} A_{j, k-j}(c(t))  \ge \max_{p \in \Omega} A_{j,k-j}(p) \ge a > a-\ep. 
\] 
Now we have proved that $[a^1_jp_1]=0$ for any $0 \le j \le k$. 
By similar arguments, one can prove that $[a^2_j p_2]=0$ for any $1 \le j \le k+1$. 
Then, for any $1 \le j \le k$, we obtain 
$x_{j, k+1-j} = [ a^1_j p_1 + a^2_j p_2] = 0$. 
This finishes the proof of the claim. 

Finally, consider the filtration on $C^{\Omega, [\delta, \infty)}_*$ as in Lemma \ref{lem_F_m}. 
As in the proof of Proposition \ref{prop_plus} (i), there exist natural isomorphisms
\[ 
E^1_{k+1,k} \cong \bigoplus_{j \in \Z }  H_0(\bar{U}_\Omega,  \bar{U}_\Omega(\delta:j, k+1-j))  \otimes  H_0(S^1), 
\]   
and
\[ 
H_0(\bar{U}_\Omega,  \bar{U}_\Omega(\delta:j, k+1-j)) 
\cong 
\begin{cases} 
H_0(\pt) &(1 \le j \le k),  \\ 0 &(\text{otherwise}). 
\end{cases}
\] 
Thus the above claim implies that the image of (\ref{eqn_convex}) is zero. 
This completes the proof of $c_k(\Omega) \ge a-\ep$. 
\end{proof} 

\begin{cor}\label{cor_asymptotics} 
For any $\Omega \in \mca{S}^2$, 
\[ 
\lim_{k \to \infty} \frac{c_k(\Omega)}{k} = \max_{(x_1, x_2) \in \Omega} \min\{ x_1, x_2\}. 
\] 
\end{cor}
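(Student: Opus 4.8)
The plan is to sandwich $c_k(\Omega)$ between the capacities of an inscribed concave domain and a circumscribed weakly convex domain, and then apply Propositions \ref{prop_concave} and \ref{prop_convex} to the bounding domains, where the limit is computable directly. Set $M := \max_{(x_1,x_2)\in\Omega}\min\{x_1,x_2\}$, and observe that this maximum is attained at a point on the diagonal, say at $(M,M)\in\Omega$ (if the maximizer is not on the diagonal one can increase $\min\{x_1,x_2\}$ by moving toward the diagonal along a coordinate direction, using $\Omega\in\mca{S}^2$ so that $\Omega$ is downward-closed along rays).

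First I would establish the lower bound. Since $(M,M)\in\Omega$, the triangle $T_-$ with vertices $(0,0)$, $(2M,0)$, $(0,2M)$ — more precisely a slightly shrunk smooth concave domain $\Omega_-\in\mca{S}^2$ with $\Omega_-\subset\Omega$ and $(M-\ep', M-\ep')$ on $\del\Omega_-$ — satisfies $\Omega_-\subset\Omega$, hence $c_k(\Omega)\ge c_k(\Omega_-)$ by Proposition \ref{prop_property_c_k}(i). By Proposition \ref{prop_concave}, $c_k(\Omega_-)=\max_{1\le j\le k}\min_{p\in\bar U_{\Omega_-}}A_{j,k+1-j}(p)$; taking $j=\lceil (k+1)/2\rceil$ the minimum over $\bar U_{\Omega_-}$ of the nearly balanced linear functional $A_{j,k+1-j}$ is achieved near the diagonal point and equals roughly $(k+1)(M-\ep')$ up to an $O(1)$ error coming from the imbalance $|j-(k+1-j)|\le 1$. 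Dividing by $k$ and letting $k\to\infty$, then $\ep'\to 0$, gives $\liminf_{k\to\infty}c_k(\Omega)/k\ge M$.

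For the upper bound I would circumscribe: for any $\ep>0$ there is a weakly convex $\Omega_+\in\mca{S}^2$ with $\Omega\subset\Omega_+$ and $\max_{p\in\Omega_+}\min\{x_1,x_2\}\le M+\ep$; concretely one can take $\Omega_+$ to be (a smoothing of) $\{x_1\le c_1\}\cap\{x_2\le c_2\}\cap\{x_1+x_2\le 2M+2\ep\}\cap(\R_{\ge0})^2$ for large $c_1,c_2$, which is convex and contains $\Omega$ once $c_1,c_2$ are large enough, while every point has $\min\{x_1,x_2\}\le M+\ep$ because $2\min\{x_1,x_2\}\le x_1+x_2\le 2M+2\ep$. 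Then $c_k(\Omega)\le c_k(\Omega_+)$, and Proposition \ref{prop_convex} gives $c_k(\Omega_+)=\min_{0\le j\le k}\max_{p\in\Omega_+}A_{j,k-j}(p)$. Choosing $j=\lfloor k/2\rfloor$, the balanced functional $A_{j,k-j}$ with $j+(k-j)=k$ satisfies $A_{j,k-j}(x_1,x_2)\le \max\{j,k-j\}\cdot(x_1+x_2)\le \frac{k+1}{2}(2M+2\ep)$ on $\Omega_+$ (using $x_1+x_2\le 2M+2\ep$ there), so $c_k(\Omega_+)\le (k+1)(M+\ep)$; dividing by $k$ and letting $k\to\infty$, then $\ep\to0$, yields $\limsup_{k\to\infty}c_k(\Omega)/k\le M$.

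Combining the two bounds gives $\lim_{k\to\infty}c_k(\Omega)/k=M=\max_{(x_1,x_2)\in\Omega}\min\{x_1,x_2\}$. The main obstacle I anticipate is purely bookkeeping rather than conceptual: making the approximating domains $\Omega_\pm$ genuinely lie in $\mca{S}^2$ (smooth, star-shaped with positive radial function) while controlling the relevant extrema to within the stated errors, and handling the parity issue $|j-(k+1-j)|\le1$ versus $j+(k-j)=k$ so that the optimal linear functional is as balanced as possible. Once one commits to evaluating the concave/convex formulas at the near-diagonal index $j\approx k/2$, the estimates reduce to the elementary inequality $2\min\{x_1,x_2\}\le x_1+x_2$ together with the monotonicity and scaling in Proposition \ref{prop_property_c_k}.
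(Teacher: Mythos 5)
Your sandwiching strategy is the right idea, but you have the two approximating domains the wrong way around, and as written both halves fail. For the upper bound you circumscribe $\Omega$ by a \emph{weakly convex} $\Omega_+$ with $\max_{\Omega_+}\min\{x_1,x_2\}\le M+\ep$; such a domain need not exist, since any weakly convex $\Omega_+\supset\Omega$ contains $\mathrm{conv}(\Omega)$, and for a thin cross-shaped $\Omega$ close to $([0,10]\times[0,\delta])\cup([0,\delta]\times[0,10])$ one has $M\approx\delta$ while $\mathrm{conv}(\Omega)$ contains $(5,5)$, forcing $\max_{\Omega_+}\min\{x_1,x_2\}\ge 5$. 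Your concrete candidate also does not contain $\Omega$: the constraint $x_1+x_2\le 2M+2\ep$ already excludes points of $\Omega$ with one large coordinate (such as $(10,\delta/2)$ above), and enlarging $c_1,c_2$ does not help. Symmetrically, for the lower bound you inscribe a \emph{concave} $\Omega_-\subset\Omega$ whose boundary passes near the diagonal near-maximizer. If $\Omega_-\subset\Omega$ is concave, then $U_{\Omega_-}$ is convex and contains $U_\Omega$, hence contains $\mathrm{conv}(U_\Omega)$; for $\Omega$ close to the square $[0,M]^2$ one has $\mathrm{conv}(U_\Omega)\supset\{x_1+x_2>M\}\cap(\R_{>0})^2$, which already contains $(M-\ep',M-\ep')$, so no such $\Omega_-$ exists. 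Indeed the largest concave subdomain is essentially the triangle $\{x_1+x_2\le M\}$, and Proposition \ref{prop_concave} applied to it only gives $c_k\approx kM/2$, a factor $2$ short of the true value $kM$; so this is not a bookkeeping issue but a loss that cannot be recovered by this route. (A smaller point: for a concave domain the minimum of $A_{j,k+1-j}$ over $\bar{U}_{\Omega_-}$ is attained at the axis points of $\del_+\Omega_-$, not near the diagonal, although for the triangle the numerical value happens to agree with your estimate.)

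The repair is to swap the roles, which is what the paper does. For the upper bound, circumscribe by a \emph{concave} $\Omega'\in\mca{S}^2$ with $\Omega\subset\Omega'\subset\{(x_1,x_2)\in(\R_{\ge0})^2 \mid \min\{x_1,x_2\}\le a+\ep\}$; this exists because the forbidden region $\{\min\{x_1,x_2\}>a+\ep\}$ is convex. Then $(a+\ep,a+\ep)\in\bar{U}_{\Omega'}$, so Proposition \ref{prop_concave} gives $c_k(\Omega)\le c_k(\Omega')\le\max_{1\le j\le k}A_{j,k+1-j}(a+\ep,a+\ep)=(k+1)(a+\ep)$. For the lower bound, inscribe a \emph{weakly convex} $\Omega'\subset\Omega$ containing a point $(x_1,x_2)$ with $\min\{x_1,x_2\}\ge a-\ep$; then Proposition \ref{prop_convex} gives $c_k(\Omega)\ge c_k(\Omega')\ge\min_{0\le j\le k}A_{j,k-j}(x_1,x_2)\ge j(a-\ep)+(k-j)(a-\ep)=k(a-\ep)$, with no balancing of $j$ needed, since every functional $A_{j,k-j}$ already evaluates to at least $k(a-\ep)$ at that single point. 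Dividing by $k$ and letting $k\to\infty$, then $\ep\to 0$, completes the proof exactly as in your final step.
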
 
\begin{proof} 
Let $a:=\max_{(x_1, x_2) \in \Omega} \min\{ x_1, x_2\}$. 
It is sufficient to show that, for any $\ep>0$ there holds 
$\limsup_{k \to \infty} \frac{c_k(\Omega)}{k} \le a+\ep$ 
and 
$\liminf_{k \to \infty} \frac{c_k(\Omega)}{k} \ge a-\ep$. 

There exists $\Omega' \in \mca{S}^2$ which is concave and 
\[ 
\Omega \subset \Omega' \subset \{ (x_1, x_2) \in (\R_{\ge 0})^2 \mid \min \{x_1, x_2\} \le  a+ \ep \}. 
\] 
Then, for any $k \ge 1$ 
\[ 
c_k(\Omega) \le c_k(\Omega') \le (k+1)(a+\ep)
\] 
where the second inequality follows from Proposition \ref{prop_concave}. 
Thus $\limsup_{k \to \infty} \frac{c_k(\Omega)}{k} \le a+\ep$. 

There exists 
$(x_1, x_2)$ and $\Omega' \in \mca{S}^2$
such that 
$\min \{x_1, x_2\} \ge a-\ep$, 
$(x_1, x_2) \in \Omega' \subset \Omega$ 
and $\Omega'$ is weakly convex. 
Then
\[ 
c_k(\Omega)  \ge c_k(\Omega') \ge k(a-\ep), 
\] 
where the second inequality follows from Proposition \ref{prop_convex}. 
Then we obtain $c_k(\Omega) \ge k(a-\ep)$ for any $k \ge 1$, 
which implies 
$\liminf_{k \to \infty} \frac{c_k(\Omega)}{k} \ge a-\ep$. 
\end{proof}


\begin{thebibliography}{ZZZ}  

\bibitem{Bourgeois_Oancea} 
F. Bourgeois, A. Oancea, 
\textit{$S^1$-equivariant symplectic homology and linearized contact homology}, 
Int. Math. Res. Not. 2017, no.13, 3849--3937. 

\bibitem{Ekeland_Hofer} 
I. Ekeland, H. Hofer, 
\textit{Symplectic topology and Hamiltonian dynamics, II}, 
Math. Z. 203 (1990) 553--567. 

\bibitem{Gutt}
J. Gutt, 
\textit{The positive equivariant symplectic homology as an invariant for some contact manifolds}, 
J. Symplectic Geom. 15 (2017), no.4, 1019--1069. 

\bibitem{Gutt_Hutchings}
J. Gutt, M. Hutchings, 
\textit{Symplectic capacities from positive $S^1$-equivariant symplectic homology}, 
Algebr. Geom. Topol. 18 (2018), no. 6, 3537--3600. 

\bibitem{Irie_equidistribution} 
K. Irie, 
\textit{Equidistributed periodic orbits of $C^\infty$-generic three-dimensional Reeb flows}, 
J. Symplectic Geom. 19 (2021), no. 3, 531--566. 

\bibitem{Seidel_biased} 
P. Seidel, 
\textit{A biased view of symplectic cohomology}, 
in "Current developments in mathematics, 2006", 211--253, 
Int.Press, 
Somerville, MA (2008). 

\bibitem{Viterbo_GAFA} 
C. Viterbo, 
\textit{Functors and computations in Floer homology with applications, I}, 
Geom. Funct. Anal. 9 (1999), no.5, 
985--1033. 





\end{thebibliography}
\end{document}